\renewcommand*\env@matrix[1][*\c@MaxMatrixCols c]{%
  \hskip -\arraycolsep
  \let\@ifnextchar\new@ifnextchar
  \array{#1}}
\def\mathcolor#1#{\@mathcolor{#1}}
\def\@mathcolor#1#2#3{%
  \protect\leavevmode
  \begingroup
    \color#1{#2}#3%
  \endgroup
}
\def\hlt#1#{\@hlt{#1}}
\def\@hlt#1#2{%
  \protect\leavevmode
  \begingroup
      \color{BrickRed}{#1}#2%
  \endgroup
}
\font\BBbannan=msbm10 at 10pt
\newcommand{\beq}{\begin{equation}}
\newcommand{\eeq}{\end{equation}}
\newcommand{\beqr}{\begin{equation}\begin{array}{l}}
\newcommand{\eeqr}{\end{array}\end{equation}}
\newcommand{\beqa}{\begin{eqnarray}}
\newcommand{\eeqa}{\end{eqnarray}}
\newtheorem{theorem}{Theorem}[section]
\newtheorem{lemma}{Lemma}[section]
\newtheorem{remark}{Remark} [section]
\newtheorem{proposition}{Proposition} [section]
\newtheorem{corollary}{Corollary} [section]
\def\R{\mathbb{R}}
\def\F{\mathcal{F}}
\def\N{\mbox{\BBbannan N}}
\def\P{\mathsf{P}}
\def\p{\mathsf{p}}
\def\ds{\mathrm{d}s}
\def\dX{\mathrm{d}X}
\def\dY{\mathrm{d}Y}
\def\dW{\mathrm{d}W}
\def\Q{\mathsf{Q}}
\def\pR{\mathsf{R}}
\def\eps{\varepsilon}
\def\argmin{\operatornamewithlimits{arg\,min}}
\def\var{\operatorname{Var}}
\def\tr{\operatorname{tr}}
\def\ol{\overline}
\def\wh{\widehat}
\def\wt{\widetilde}
\def\E{\mathbf{E}}
\title{
Value Function Estimators for Feynman-Kac Forward-Backward SDEs 
    in Stochastic Optimal Control
} 
\author{Kelsey P. Hawkins, Ali Pakniyat, Panagiotis Tsiotras}
\date{}
\begin{document}

\maketitle

%


\begin{abstract}
    Two novel numerical estimators are proposed for solving forward-backward stochastic differential equations (FBSDEs) appearing in the Feynman-Kac representation of the value function in stochastic optimal control problems.
  In contrast to the current numerical approaches which are based on the discretization of the continuous-time FBSDE, we propose a converse approach, namely, we obtain a discrete-time approximation of the on-policy value function, and then we derive a discrete-time estimator that resembles the continuous-time counterpart.
  The proposed approach allows for the construction of higher accuracy estimators along with error analysis.
  The approach is applied to the policy improvement step in reinforcement learning. Numerical results and error analysis are demonstrated using (i) a scalar nonlinear stochastic optimal control problem and (ii) a four-dimensional linear quadratic regulator (LQR) problem.
  The proposed estimators show  significant improvement in terms of accuracy in both cases over Euler-Maruyama-based estimators used in competing approaches. 
  In the case of LQR problems, we demonstrate that our estimators result in near machine-precision level accuracy,  in contrast to previously proposed methods that can potentially diverge on the same problems.
\end{abstract}

\section{Introduction} 

Feynman-Kac representation theory and its associated forward-backward stochastic differential equations \mbox{(FBSDEs)} has been gaining traction as a framework to solve nonlinear stochastic optimal control problems, including problems with quadratic cost \cite{exarchos2018stochastic}, minimum-fuel ($L_1$-running cost) problems \cite{Exarchos2018}, differential games \cite{Exarchos2016,Exarchos2018a}, and reachability problems \cite{exarchos2018stochastic,mete2002stochastic}. Although FBSDE-based methods have seen growing attention in both the controls and robotics communities recently, much of the research originated in the mathematical finance community \cite{Bender2007,Longstaff2001,ma2007forward}. 
While initial results demonstrate promise in terms of flexibility and theoretical validity, numerical algorithms which leverage this theory have not yet matured. 
For even modest problems, state-of-the-art algorithms can be unstable, producing value function
    approximations which quickly diverge. 
Thus, producing more robust numerical methods is critical for the broader adoption of FBSDE methods for real-world tasks.

Numerical solution methods for Feynman-Kac-type \mbox{FBSDEs} broadly consist of two steps, a forward pass, 
    which generates Monte Carlo samples of the forward stochastic process, 
    and a backward pass, which iteratively approximates the value function backwards in time. 
Typically, FBSDE methods perform this approximation using a least-squares Monte Carlo (LSMC) scheme, 
    which implicitly solves the backward SDE using parametric function approximation \cite{Longstaff2001}. 
The approximate value function fit in the backward pass is then used to improve sampling 
    in an updated forward pass, leading to an iterative algorithm which, ideally, improves 
    the approximation till convergence.
Although FBSDE methods seem similar to differential dynamic programming (DDP) 
    techniques \cite{jacobson1970differential,theodorou2010stochastic,NIPS2007_3297},
    the approach is significantly different.
DDP methods require first and second order derivatives of the dynamics, and directly
    compute a quadratic approximation of the value function using constraints on the derivatives
    of the value function.
Comparatively, FBSDE LSMC only uses estimates of the value function at a distribution
    of states, using the derivative of the value function to improve the accuracy of
    the estimator.
FBSDE methods are more flexible in that they do not require 
    derivatives of the dynamics and can be used with models of the value function
    which are not necessarily quadratic.
Furthermore, for most DDP applications, a quadratic running cost with respect to the control 
    is required for appropriate regularization
    \cite[Section 2.2.3]{tassa2011theory}, 
    whereas the FBSDE method more easily accommodates non-quadratic running costs
    (e.g., of the class $L_1$ or zero-valued), lending to a variety of
    control applications \cite{Exarchos2018}.


    The underlying foundation of Feynman-Kac-based FBSDE algorithms  is the intrinsic relationship between the solution of a broad class of second-order parabolic or elliptic PDEs to the solution of FBSDEs (see, e.g., \cite[Chapter 7]{yong1999stochastic}), brought to prominence in \cite{Pardoux1990,peng1993backward,el1997backward}. Both Hamilton-Jacobi-Bellman (HJB) and Hamilton-Jacobi-Isaacs (HJI) second order PDEs, utilized for solving, respectively, stochastic optimal control and stochastic differential game problems, can thus be solved via FBSDE methods, even when the dynamics and costs are nonlinear and non-quadratic, respectively. This provides an alternative to the grid-based direct solution of PDEs, typically solved using finite-difference, finite-element, or level-set schemes, known for poor scaling in high dimensional state spaces ($n \geq 4$). 


In this work, we investigate the discrete-time approximation of the backward SDE in the context
    of solving for the value function in the backward pass in FBSDE methods.
Although  for some special cases analytic solutions of the backward SDEs over short intervals can be accommodated into the associated algorithms \cite{Longstaff2001}, 
    for many nonlinear problems analytic solutions are not available and numerical integration based on time-discretization is a necessity.
In  the currently available algorithms in the literature Euler-Maruyama approximations are employed for discretizing the continuous-time 
    FBSDEs \cite{exarchos2018stochastic}, to solve for an approximation of the
    continuous-time value function.
In this paper, 
instead of the direct application of the Euler-Maruyama approximation on the Feynman-Kac FBSDEs, we formulate a discrete time problem with the Euler-Maruyama approximation of the dynamics, costs, and value function, and then
 we
    derive discrete-time relationships using Taylor expansions which resemble their 
    continuous-time counterparts.
By doing so, we arrive at a set of alternative estimators for the value function.

The primary contributions of this paper are as follows:

\begin{itemize}
    \item Proposing a pair of alternative estimators for the value function used in the backward pass 
          of a Girsanov-drifted Feynman-Kac FBSDE numerical method.
    \item Characterizing the theoretical bias and variance of these estimators and show
          their theoretic superiority to previously proposed estimators.
    \item Numerically confirming the theoretical results on representative stochastic optimal control problems.
\end{itemize}

This paper expands upon the authors' prior work in \cite{CDCdtfbsde2021}, first by
    providing more details into how the proposed estimators are constructed.
Second, we provide detailed proofs for the stated theorems, especially the discrete-time
    version of Girsanov's theorem, which allows for the interpretation of the error.
In addition, we discuss how the methodology can be adapted to produce an approximate
    policy improvement.
Finally, in addition to a more detailed presentation of the scalar nonlinear example in \cite{CDCdtfbsde2021}, we  present results of experiments on a four-dimensional LQR problem,
    verifying our theoretical claims about the accuracy of the proposed estimators.

The structure of the paper is as follows.
In Section~\ref{sec:dtfksbsdes} we introduce the stochastic optimal control problem 
    we are interested in, as well as the continuous-time approach to solving 
    for an on-policy value function using drifted FBSDEs.
At the end of this section we describe a discrete-time method of approximating the backward SDE 
    which we will improve upon.
In Section~\ref{sec:fbdeqs} we introduce our proposed approach, beginning by 
    replacing the continuous-time problem with a discrete-time approximation.
We then use discrete-time relationships to arrive at estimators which resemble the estimators
derived from continuous-time theory.
We also provide an error analysis for the proposed estimators.
Next, in Section~\ref{sec:policyimprove}, we briefly show how a similar approach to derive
    the estimators can be used to approximate the Q-value function for policy improvement
    methods in reinforcement learning problems.
Finally, in Section~\ref{sec:expresults} we present results from two numerical experiments which
    confirm the error analysis and illustrate the benefits of our approach over previously proposed estimators.

\section{Continuous-Time Feynman-Kac FBSDEs} \label{sec:dtfksbsdes}

In this section we introduce the stochastic optimal control problem we are interested in,
    and show how its solution can be obtained as a pair of forward-backward stochastic
    differential equations (FBSDEs).
Further, we discuss how these continous-time FBSDEs can be approximated using the Euler-Maruyama method.

\subsection{Stochastic Optimal and On-Policy Value Functions} \label{sec:soc}

We start with a complete, filtered probability space ${(\Omega, \F, \{\F_t\}_{t \in [0,T]}, \Q)}$, 
    on which $W_s^\Q$ is an $n$-dimensional standard Brownian (Wiener) process with respect 
    to the probability measure $\Q$ and adapted to the filtration $\{\F_t\}_{t \in [0,T]}$. 
Consider a stochastic nonlinear system governed by the It\^{o}~differential equation
\begin{align}
    \dX_s &= f(s,X_s,u_s)\, \ds + \sigma(s,X_s) \, \dW^\Q_s \text{,} & X_0 &= x_0 \text{,}
    \label{eq:SOCdyn}
\end{align}
    where $X_s$ is a state process 
    taking values in $\R^n$, 
    $u_{[0,T]}$ is a measurable and adapted input process 
    taking values in the compact set 
    $U \subseteq \R^m$, 
    and $f: [0,T] \times \R^n \times U \rightarrow \R^n$,
    $\sigma: [0,T] \times \R^n \rightarrow \R^{n \times n}$ are the Markovian drift
    and diffusion functions, respectively.
The cost associated with a given control signal $u_{[t,T]}$ is
\begin{align}
    S_t(u_{[t,T]}) &:= \int^T_t \ell(s,X_s,u_s) \, \ds + g(X_T) \text{,} 
\end{align}
    where $\ell: [0,T] \times \R^n \times U \rightarrow \R_+$ is the running cost, 
    and \mbox{$g: \R^n \rightarrow \R_+$} is the terminal cost. 
We assume that 
    $f, \sigma, \ell, g$ are uniformly continuous and Lipschitz in $x$ for all ${t \in [0,T]}, {u \in U}$,
    and that $\sigma^{-1}$ exists and is uniformly bounded on its domain.

The stochastic optimal control (SOC) problem is to determine the optimal value function
\begin{align}
    V^*(t,x) &= \inf_{u_{[t,T]}} \big \{ \E_{{\Q}}[\, S_t(u_{[t,T]}) \, | X_t = x] \big \}
    \tag{SOC} \label{eq:SOC} \text{,} 
\end{align}
    (see \cite[Section~4.3]{yong1999stochastic}).
Given the previous assumptions on the dynamics and costs,
    $V^*$ is a unique viscosity solution, continuous on $[0,T] \times \R^n$,
    of the associated 
    Hamilton-Jacobi-Bellman PDE \cite[Chapter~4 Theorem~5.2; Theorem~6.1]{yong1999stochastic}.

The iterative approach to solving the optimal control problem is to successively
    improve approximations of the optimal policy and optimal value function $(\pi^*, V^*)$,
    refining an arbitrary policy $\mu$ and its associated on-policy value function $V^\mu$,
    which characterizes the cost-to-go under this policy.
Consider the space of admissible feedback policies, that is, measurable functions
    $\mu : [0,T] \times \R^n \rightarrow U$ for which there exists a weak SDE solution for
\begin{align}
    \dX_s &= f^\mu_s \, \ds + \sigma_s \, \dW^\Q_s, & X_0 &= x_0 \text{,}
    \label{eq:fsdeorig} 
\end{align}
    where  
    $f^\mu_s := f^\mu(s,X_s)$,
    $f^\mu := f(t,x,\mu(t,x))$, 
    and henceforth abbreviate $\ell$, and $\sigma$ similarly.
The on-policy value function $V^\mu$ is defined as 
\begin{align}
    \begin{aligned}
        V^\mu(t,x) &= \E_{{\Q}}[\, S^\mu_t \, | X_t = x] \text{,} \\
        S^\mu_t &:= \int^T_t \ell^\mu_s \, \ds + g(X_T) \text{,} 
    \end{aligned} \label{eq:onpolicyvf}
\end{align}
    with the process $X_s$ satisfying the forward SDE (FSDE) \eqref{eq:fsdeorig}, and
    its associated Hamilton-Jacobi PDE is
\begin{align}
    \begin{aligned}
        \partial_t V^\mu + \frac{1}{2} \tr[\sigma \sigma^\top \partial_{xx} V^\mu] 
        + (\partial_{x} V^\mu)^\top f^\mu + \ell^\mu \big |_{t,x} = 0 & \text{,} \\
        V^\mu(T,x) = g(x) \text{,} &
    \end{aligned} \label{eq:hjpde}
\end{align}
    for $(t,x) \in [0,T) \times \R^n$, where $\partial_t$ and $\partial_x$
    are the partial derivative operators with respect to 
    $t$ and $x$, and $\partial_{xx}$ is the Hessian with respect to $x$.
Under the typical assumptions of \cite[Chapter~5, Theorem~6.6]{yong1999stochastic},
    a feedback policy satisfying the inclusion
\begin{align}
    \pi^*(s,x) \in \argmin_{u \in U} \{ \ell(s,x,u) + f(s,x,u)^\top \partial_x V^*(s,x) \} \text{,}
    \label{eq:optpolicy}
\end{align}
    when $V^*$ is differentiable,\footnote{When $V^*$ is not differentiable a similar result
    can be obtained using the superdifferentials of the viscosity solution.} 
    is optimal, that is, $V^{\pi^*} \equiv V^*$.
More generally, when 
    $f^\mu, \ell^\mu$ are uniformly continuous and Lipschitz in $x$ for all ${t \in [0,T]}$
    (which we henceforth assume),
    the on-policy PDE \eqref{eq:hjpde} admits a unique viscosity solution 
    \cite[Chapter~7, Theorem~4.1]{yong1999stochastic}.
    \footnote{We assume without proof that the space of policies $\mu$ for which this
        condition is satisfied either contains $\pi^*$ or a close approximation of it.}

\subsection{On-Policy FBSDE}
The positivity of $\sigma\sigma^{\top}$ yields that \eqref{eq:hjpde} is a parabolic PDE and, 
    hence, by the Feynman-Kac Theorem (see, e.g. \cite{peng1991probabilistic}) its solution is 
    linked to the solution $(X_s,Y_s,Z_s)$ of the pair of FBSDEs composed of the 
    FSDE \eqref{eq:fsdeorig} and the backward SDE (BSDE)
\begin{align}
    \dY_s &= -\ell^\mu_s \, \ds + Z^{\top}_s \, \dW_s^{\Q}, & Y_T &= g(X_T)  \text{,} 
    \label{eq:bsdeorig}
\end{align}
    where $Y_s$ and $Z_s$ are, respectively, one and $n$-dimensional adapted processes.
\begin{theorem}[Feynman-Kac Representation] \label{thm:continfk}
    The solution 
    $(X_s,Y_s,Z_s)$ of the \mbox{FBSDE} system
    \eqref{eq:fsdeorig} and \eqref{eq:bsdeorig}
    satisfies
\begin{gather}
    \begin{aligned}
        Y_s &= V^\mu(s,X_s) \text{,} & s \in [0,T] \text{,} \\
        Z_s &= \sigma_s^\top \partial_x V^\mu(s,X_s) \text{,} & \text{a.e.} \;  s \in [0,T] \text{,}
    \end{aligned} \label{eq:yzv}
\end{gather}
    $\Q$-almost surely (a.s.).
\hfill $\square$
\end{theorem}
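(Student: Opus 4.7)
The plan is to apply It\^o's formula to $V^\mu(s,X_s)$ along the forward trajectory, substitute the Hamilton-Jacobi PDE \eqref{eq:hjpde} into the resulting drift, and then invoke uniqueness of adapted BSDE solutions to identify $V^\mu(s,X_s)$ and $\sigma_s^\top \partial_x V^\mu(s,X_s)$ with $Y_s$ and $Z_s$, respectively.

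First I would proceed as if $V^\mu \in C^{1,2}([0,T)\times\R^n)\cap C([0,T]\times\R^n)$, so that It\^o's formula yields
\begin{align*}
    \mathrm{d}V^\mu(s,X_s) &= \Bigl[\partial_t V^\mu + (\partial_x V^\mu)^\top f^\mu_s \\
    &\qquad + \tfrac{1}{2}\tr[\sigma_s\sigma_s^\top \partial_{xx}V^\mu]\Bigr]\,\ds \\
    &\quad + (\partial_x V^\mu)^\top \sigma_s \, \dW_s^{\Q}\text{,}
\end{align*}
with all derivatives evaluated at $(s,X_s)$. By \eqref{eq:hjpde} the bracketed drift collapses to $-\ell^\mu_s$, so setting $\widetilde Y_s := V^\mu(s,X_s)$ and $\widetilde Z_s := \sigma_s^\top \partial_x V^\mu(s,X_s)$ would give
\begin{align*}
    \mathrm{d}\widetilde Y_s &= -\ell^\mu_s\,\ds + \widetilde Z_s^\top \, \dW_s^{\Q}, & \widetilde Y_T &= g(X_T)\text{,}
\end{align*}
which is exactly \eqref{eq:bsdeorig}. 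Under the stated Lipschitz and uniform continuity hypotheses, $(\widetilde Y,\widetilde Z)$ lies in the space of square-integrable adapted processes required by the Pardoux-Peng theory, and the uniqueness theorem of \cite{Pardoux1990} for BSDEs with Lipschitz generators then forces $(Y_s,Z_s)=(\widetilde Y_s,\widetilde Z_s)$ $\Q$-a.s., establishing \eqref{eq:yzv}.

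The main obstacle is that the stated assumptions only guarantee $V^\mu$ to be a (continuous) viscosity solution of \eqref{eq:hjpde}, so the direct application of It\^o's formula above is not a priori justified. To close this gap I would regularize: mollify the coefficients $f^\mu,\sigma,\ell^\mu,g$ into smooth approximations whose associated value functions $V^\mu_\eps$ are classical $C^{1,2}$ solutions of the corresponding regularized PDEs, apply the preceding argument verbatim at each $\eps$, and then pass to the limit $\eps\to 0$ using stability of viscosity solutions together with the standard $L^2$-stability estimates for BSDEs with Lipschitz data. Equivalently, and more directly, one could invoke the nonlinear Feynman-Kac correspondence of Peng \cite{peng1991probabilistic}, developed precisely to handle the viscosity setting, which yields the identification \eqref{eq:yzv} without requiring classical differentiability of $V^\mu$.
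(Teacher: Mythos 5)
Your argument is correct and is essentially the standard proof that the paper delegates to its citation of Yong--Zhou [Chapter~7, Theorem~4.5]: It\^o's formula applied to $V^\mu(s,X_s)$, cancellation of the drift via the PDE \eqref{eq:hjpde}, and identification of $(Y_s,Z_s)$ through uniqueness of the square-integrable adapted BSDE solution. You also correctly flag and repair the only delicate point---that $V^\mu$ is a priori only a continuous viscosity solution, so It\^o's formula does not apply directly---via mollification plus stability of viscosity solutions and $L^2$-stability of BSDEs, or equivalently by invoking Peng's nonlinear Feynman--Kac correspondence, which is precisely how the cited reference treats this case.
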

\begin{proof}
    See \cite[Chapter~7, Theorem~4.5, (4.29)]{yong1999stochastic}.
\end{proof}
In numerical methods this theorem is often applied over short time intervals,
    leading to the following result.
\begin{corollary}
\label{thm:DeltaYcorollary}
    Let $(X_s,Y_s,Z_s)$ be the solution to the \mbox{FBSDE} system
    \eqref{eq:fsdeorig} and \eqref{eq:bsdeorig} and define
\begin{align}
    \wh{Y}_{t,\tau} &:= Y_\tau - \Delta \wh{Y}_{t,\tau} \text{,} 
    \label{eq:bsdediff}
\end{align}
    where $\Delta \wh{Y}_{t,\tau}$ is either
\begin{align}
    \Delta \wh{Y}_{t,\tau}^\textnormal{noisy} 
    &:= - \int_t^\tau \ell^\mu_s \ds +  \int_t^\tau Z^\top_s \dW_s^\Q \text{,} \label{eq:bsdediffnoisy} \\
    \intertext{or} 
    \Delta \wh{Y}_{t,\tau}^\textnormal{noiseless} 
    &:= - \int_t^\tau \ell^\mu_s \ds \text{.}  
    \label{eq:bsdediffnoiseless}
\end{align}
    Then,
\begin{align}
    Y_t &= \E_{\Q}[\wh{Y}_{t,\tau}^\textnormal{noisy}| X_t]
    = \E_{\Q}[\wh{Y}_{t,\tau}^\textnormal{noiseless}| X_t] = V^\mu(t,X_t) \text{,}
    \label{eq:thmcontinfk}
\end{align}
    $\Q$-a.s. for $0 \leq t \leq \tau \leq T$.
\hfill $\square$
\end{corollary}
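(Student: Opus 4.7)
The plan is to integrate the BSDE \eqref{eq:bsdeorig} over $[t,\tau]$ and then directly compare the result with the two proposed definitions of $\Delta\wh{Y}_{t,\tau}$. Writing the BSDE in integral form between $t$ and $\tau$ gives
\begin{align*}
    Y_\tau - Y_t = -\int_t^\tau \ell^\mu_s \, \ds + \int_t^\tau Z_s^\top \, \dW^\Q_s,
\end{align*}
so that $Y_t = Y_\tau - \Delta\wh{Y}_{t,\tau}^{\textnormal{noisy}}$ pathwise. By construction $\wh{Y}_{t,\tau}^{\textnormal{noisy}} = Y_t$ almost surely, which is already a random variable measurable with respect to $\F_t$; hence $\E_\Q[\wh{Y}_{t,\tau}^{\textnormal{noisy}}\mid X_t] = \E_\Q[Y_t \mid X_t]$. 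Since Theorem~\ref{thm:continfk} identifies $Y_t$ with the deterministic function $V^\mu(t,X_t)$ of $X_t$, this conditional expectation is simply $V^\mu(t,X_t)$.

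For the noiseless version, subtracting $\Delta\wh{Y}_{t,\tau}^{\textnormal{noiseless}}$ from $Y_\tau$ yields
\begin{align*}
    \wh{Y}_{t,\tau}^{\textnormal{noiseless}} = Y_t + \int_t^\tau Z_s^\top \, \dW^\Q_s.
\end{align*}
The stochastic integral is an $\F_\cdot$-adapted It\^{o} integral; under the Lipschitz/integrability hypotheses already in force it is a square-integrable $\Q$-martingale with respect to $\{\F_s\}_{s \geq t}$ and therefore has zero $\F_t$-conditional expectation. Since $X_t$ is $\F_t$-measurable, the tower property gives
\begin{align*}
    \E_\Q\!\left[\int_t^\tau Z_s^\top \, \dW^\Q_s \,\Big|\, X_t\right]
    = \E_\Q\!\left[\,\E_\Q\!\left[\int_t^\tau Z_s^\top \, \dW^\Q_s \,\Big|\, \F_t\right] \Big|\, X_t\right] = 0,
\end{align*}
and combining this with $Y_t = V^\mu(t,X_t)$ yields $\E_\Q[\wh{Y}_{t,\tau}^{\textnormal{noiseless}}\mid X_t] = V^\mu(t,X_t)$, which completes the proof.

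I do not expect a genuine obstacle here: both identities are essentially bookkeeping on top of Theorem~\ref{thm:continfk}. The only subtle point worth writing out explicitly is the tower-property step used to pass from an $\F_t$-conditional expectation (where the martingale property is immediate) to the $X_t$-conditional expectation appearing in the statement; this is where the Markovian hypotheses on $f^\mu$ and $\sigma$ implicitly justify that conditioning on $X_t$ loses no information relevant to the claim.
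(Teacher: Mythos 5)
Your proposal is correct and follows essentially the same route as the paper: the noisy identity is just the integral form of the BSDE (the definition of an SDE solution) combined with the $X_t$-measurability of $Y_t$ from the Feynman--Kac representation, and the noiseless identity follows from the vanishing $\F_t$-conditional expectation of the It\^{o} integral together with the tower property. Your write-up merely makes the tower-property step more explicit than the paper does; there is no substantive difference.
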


\begin{proof}
    The fact that $Y_t = \wh{Y}_{t,\tau}^\textnormal{noisy}$, $\Q$-a.s., follows directly from
    the definition of an SDE solution. 
    Since $Y_t$ (and consequently $\wh{Y}_{t,\tau}^\textnormal{noisy}$) 
        is $X_t$-measurable due to \eqref{eq:yzv}, it follows that
    $Y_t = \E_{\Q}[\wh{Y}_{t,\tau}^\textnormal{noisy}| X_t]$.
    The equality $\E_{\Q}[\wh{Y}_{t,\tau}^\textnormal{noisy}| X_t]
    = \E_{\Q}[\wh{Y}_{t,\tau}^\textnormal{noiseless}| X_t]$ follows immediately from the standard property of It\^o integrals (and the tower property of conditional expectation) that yields $\E_{\Q}[\int_t^\tau Z^\top_s \dW_s^\Q| X_t] = 0$ \cite[Chapter~7, Theorem~3.2]{yong1999stochastic}.
\end{proof}

\subsection{Least Squares Monte Carlo} \label{sec:lsmc}

Least squares Monte Carlo (LSMC) is a scheme for obtaining the parameters of a parametric model of the 
    value function $V^\mu$,
    originally credited to \cite{Longstaff2001}. 
\begin{corollary}\label{thm:lsmc}
The minimizer $\phi^*$ of 
\begin{align}
    &\inf_{\phi \in L_2} 
    \E_{\Q}[(\wh{Y}_{t,\tau} - \phi)^2] \text{,}
    \label{eq:thmcontinfk4}
\end{align}
    over $X_t$-measurable square integrable variables $\phi$ coincides with
    the value function, that is, $\phi^* = V^\mu(t,X_t)$.
\hfill $\square$
\end{corollary}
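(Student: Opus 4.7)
The plan is to reduce the statement to the standard fact that a conditional expectation is the $L^2$-orthogonal projection onto the subspace of measurable square-integrable random variables, and then to identify that conditional expectation using Corollary~\ref{thm:DeltaYcorollary}. Concretely, denote by $\mathcal{H}_t \subset L^2(\Omega, \F, \Q)$ the closed subspace of $\sigma(X_t)$-measurable, square-integrable real-valued random variables. The candidate minimizer is $\phi^* := \E_\Q[\wh{Y}_{t,\tau}\mid X_t]$, which lies in $\mathcal{H}_t$ by construction.

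First I would verify that $\wh{Y}_{t,\tau} \in L^2(\Omega,\F,\Q)$, so that the infimum in \eqref{eq:thmcontinfk4} is finite and the $L^2$-projection is well-defined. For the noisy version this uses that $\ell^\mu$ is continuous and Lipschitz in $x$ (together with the standard moment bounds on the FSDE solution $X_s$) so $\int_t^\tau \ell^\mu_s\,\ds \in L^2$, and the It\^o isometry applied to $\int_t^\tau Z_s^\top \dW_s^\Q$ gives $\E_\Q\!\big[\,|\!\int_t^\tau Z_s^\top \dW_s^\Q|^2\big] = \E_\Q[\int_t^\tau \|Z_s\|^2 \ds] < \infty$, where square-integrability of $Z$ is guaranteed by the existence-uniqueness theory for the BSDE. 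Since $Y_\tau = V^\mu(\tau,X_\tau)$ inherits $L^2$-integrability from the polynomial growth of $V^\mu$, we obtain $\wh{Y}_{t,\tau}\in L^2$. The noiseless version is handled identically, dropping the stochastic integral term.

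Second, I would carry out the orthogonality argument. For any $\phi \in \mathcal{H}_t$, write
\begin{equation}
\wh{Y}_{t,\tau} - \phi = \big(\wh{Y}_{t,\tau} - \phi^*\big) + \big(\phi^* - \phi\big),
\end{equation}
and observe that $\phi^* - \phi \in \mathcal{H}_t$ while the residual $\wh{Y}_{t,\tau} - \phi^*$ is orthogonal to $\mathcal{H}_t$ by the defining property of conditional expectation (via the tower rule, $\E_\Q[(\wh{Y}_{t,\tau}-\phi^*)\psi] = \E_\Q[\,\E_\Q[\wh{Y}_{t,\tau}-\phi^*\mid X_t]\,\psi\,] = 0$ for every bounded $X_t$-measurable $\psi$, and then extend to all of $\mathcal{H}_t$ by density). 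The Pythagorean identity then yields
\begin{equation}
\E_\Q\!\big[(\wh{Y}_{t,\tau}-\phi)^2\big] = \E_\Q\!\big[(\wh{Y}_{t,\tau}-\phi^*)^2\big] + \E_\Q\!\big[(\phi^*-\phi)^2\big],
\end{equation}
which is minimized uniquely (in $L^2$) by $\phi = \phi^*$.

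Finally, I would invoke Corollary~\ref{thm:DeltaYcorollary} to rewrite $\phi^* = \E_\Q[\wh{Y}_{t,\tau}\mid X_t] = V^\mu(t,X_t)$, for either the noisy or noiseless choice of $\wh{Y}_{t,\tau}$, yielding the claimed identification. The only genuine technical point is the integrability check in the first step; the projection argument itself is entirely standard, so I do not anticipate a substantive obstacle beyond bookkeeping of the $L^2$ estimates already implicit in the well-posedness of the FBSDE.
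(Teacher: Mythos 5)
Your proposal is correct and follows essentially the same route as the paper: the paper simply cites the $L_2$-projective property of conditional expectation and applies it to \eqref{eq:thmcontinfk}, whereas you additionally spell out the orthogonality/Pythagorean argument and the integrability bookkeeping that the cited property encapsulates. No substantive difference in approach.
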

\begin{proof}
    This follows from the $L_2$-projective properties of conditional expectation \cite[Chapter~10.3, Property~11]{resnick2003probability} applied to~\eqref{eq:thmcontinfk}. 
\end{proof}

In LSMC numerical methods, we approximate the minimization in \eqref{eq:thmcontinfk4}
    over the subspace of $X_t$-measurable variables $\{\phi(X_t;\alpha) : \alpha \in \mathcal{A}\}$,
    where $\phi(x;\alpha)$ is a function representation with parameters 
$\alpha \in \mathcal{A}$ (we assume henceforth that \mbox{$\phi(x;\alpha) \in C^2(\R^n)$} for all
    $\alpha \in \mathcal{A}$).
Let $\{ (x^k_t,\wh{y}^k_{t}) \}_{k = 1}^M$
    be a set of samples approximating the joint distribution $(X_t, \wh{Y}_{t,\tau})$,
    denoted as $\wt{\Q}$.
The optimal parameters for this representation are found by minimizing
\begin{align}
    \alpha_t^* 
    &:= \argmin_{\alpha \in \mathcal{A}} 
    \E_{\wt{\Q}}[(\wh{Y}_{t,\tau} - \phi(X_t; \alpha_t))^2] \nonumber \\
    & \approx \argmin_{\alpha \in \mathcal{A}} \sum_{k = 1}^M
    \frac{1}{M} (\wh{y}^k_t - \phi(x^k_t; \alpha_t))^2 \label{eq:alphastar} 
    \text{.}
\end{align}
When the function representation is linear in the parameters $\phi(x;\alpha) = \Phi(x) \alpha$
    this optimization is a linear least squares regression problem in $\mathcal{A}$.
The optimal parameters define the new approximate representation of the value function, by
\begin{align}
    V^\mu(t,x) \approx \wt{V}^\mu(t,x) := \phi(x;\alpha^*_t) \text{.}
\end{align}

\subsection{Off-Policy Drifted FBSDE} \label{sec:driftfbsde}

We now present a result based on Girsanov's theorem, namely, that an alternative pair of 
    \textit{drifted} FBSDEs with a different trajectory
    distribution can be used to estimate the 
    same value function $V^\mu$. 
This result will be used to disentangle the drift of the forward distribution from
    the policy associated with the value function.
\begin{theorem} \label{thm:driftfbsde}
    Let ${(\Omega, \F, \{\F_t\}_{t \in [0,T]}, \P)}$ be a new filtered probability space on
    which $W^\P_s$ is Brownian and
    let $K_s$ be any $\F_s$-progressively measurable process on the interval $[0,T]$
    such that
\begin{align}
D_s &:= \sigma_s^{-1} (f^\mu_s - K_s) \text{,}
\end{align}
    is bounded and
\begin{align}
    \dX_s &= K_s \, \ds + \sigma_s \, \dW^\P_s, & X_0 &= x_0 \text{,}
    \label{eq:driftfsde}
\end{align}
    admits a unique square-integrable solution 
    $X_s$ (see e.g. \cite[Chapter~1, Theorem~6.16]{yong1999stochastic}).
Then, the Hamilton-Jacobi PDE \eqref{eq:hjpde} has a representation as the unique square-integrable solution $(X_s,Y_s,Z_s)$ to the \mbox{FBSDEs} 
\eqref{eq:driftfsde} and
\begin{align}
    \dY_s &= -(\ell^\mu_s + Z^{\top}_s D_s) \, \ds + Z^{\top}_s \dW_s^{\P}, & Y_T &= g(X_T)  \text{,}     \label{eq:driftbsde}
\end{align}
in the sense that
\begin{gather}
    \begin{aligned}
        Y_s &= V^\mu(s,X_s) \text{,} & s \in [0,T] \text{,} \\
        Z_s &= \sigma_s^\top \partial_x V^\mu(s,X_s) \text{,} & \text{a.e.} \;  s \in [0,T] \text{,}
    \end{aligned} \label{eq:yzv2}
\end{gather}
    $\P$-a.s..
\hfill $\square$
\end{theorem}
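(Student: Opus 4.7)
The plan is to prove Theorem~\ref{thm:driftfbsde} via a Girsanov change of measure from $\Q$ (under which the original FBSDE pair \eqref{eq:fsdeorig}--\eqref{eq:bsdeorig} is posed) to the new measure $\P$, so that the same trajectory of $(X_s,Y_s,Z_s)$ can be re-expressed with the desired drift $K_s$. The key observation is that the quantity $D_s = \sigma_s^{-1}(f^\mu_s - K_s)$ is exactly the drift discrepancy that Girsanov will absorb.

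First, since $D_s$ is bounded by assumption, the process
\begin{align}
    M_T = \exp\!\left( -\int_0^T D_s^\top \, \dW_s^\Q - \frac{1}{2} \int_0^T \|D_s\|^2 \, \ds \right)
\end{align}
is a strictly positive $\Q$-martingale (Novikov's condition is trivially satisfied due to boundedness). Defining $\P$ via the Radon--Nikodym derivative $\mathrm{d}\P/\mathrm{d}\Q = M_T$ yields an equivalent probability measure, and Girsanov's theorem implies that $W^\P_s := W^\Q_s + \int_0^s D_r \, \mathrm{d}r$ is a standard $\P$-Brownian motion on $[0,T]$.

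Next, I substitute $\dW_s^\Q = \dW_s^\P - D_s \, \ds$ into the forward SDE \eqref{eq:fsdeorig}. Using $\sigma_s D_s = f^\mu_s - K_s$, the drift collapses cleanly: $(f^\mu_s - \sigma_s D_s)\,\ds + \sigma_s \, \dW^\P_s = K_s \, \ds + \sigma_s\, \dW^\P_s$, recovering \eqref{eq:driftfsde}. The same substitution applied to the original BSDE \eqref{eq:bsdeorig} gives
\begin{align}
    \dY_s = -\ell^\mu_s \, \ds + Z^\top_s (\dW^\P_s - D_s\, \ds)
           = -(\ell^\mu_s + Z^\top_s D_s)\, \ds + Z^\top_s \, \dW^\P_s,
\end{align}
which matches \eqref{eq:driftbsde}. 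Existence and square-integrability of the solution under $\P$ follow from the hypothesis on \eqref{eq:driftfsde} together with standard BSDE well-posedness results (e.g., \cite[Chapter~7, Theorem~3.2]{yong1999stochastic}) applied in the $\P$-framework, since $\ell^\mu_s + Z^\top_s D_s$ inherits the required Lipschitz structure in $Z$ from the boundedness of $D_s$.

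Finally, for the identification \eqref{eq:yzv2}: Theorem~\ref{thm:continfk} already gives $Y_s = V^\mu(s, X_s)$ and $Z_s = \sigma_s^\top \partial_x V^\mu(s,X_s)$ $\Q$-a.s. Since $\P$ and $\Q$ are mutually absolutely continuous (by strict positivity of $M_T$), these identifications transfer verbatim to $\P$-a.s., completing the proof. The main subtle step here is recognizing that no additional PDE analysis is required: the PDE \eqref{eq:hjpde} is a purely analytic object independent of the chosen measure, and Girsanov simply provides a new stochastic representation of its solution along a reparameterized sample-path distribution. The trickiest technical obligation in a fully rigorous write-up would be verifying well-posedness of the drifted BSDE under $\P$ and confirming that the stochastic integral $\int_0^\cdot Z_s^\top \dW^\P_s$ remains a true martingale under $\P$, which again reduces to the boundedness of $D_s$ and the square-integrability of $Z_s$.
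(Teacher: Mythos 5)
Your proposal rests on the same essential machinery as the paper's proof---Girsanov's theorem together with equivalence of measures to transfer the Feynman--Kac identification \eqref{eq:yzv} from one measure to the other---but you run the change of measure in the opposite direction: you start from $\Q$, where the on-policy FBSDE \eqref{eq:fsdeorig}--\eqref{eq:bsdeorig} is posed, and construct $\P$ via $\mathrm{d}\P/\mathrm{d}\Q = M_T$, whereas the paper starts from the given space $(\Omega,\F,\{\F_t\},\P)$, first obtains the unique square-integrable solution $(Y_s,Z_s)$ of the drifted BSDE \eqref{eq:driftbsde} there (using the hypothesized solution of \eqref{eq:driftfsde} to invoke the BSDE well-posedness theorem), and only then constructs $\Q$ via $\mathrm{d}\Q = \Theta_T\,\mathrm{d}\P$. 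The two exponential densities are reciprocal ($M_T = \Theta_T^{-1}$ after substituting $\dW^\P_s = \dW^\Q_s + D_s\,\ds$), so the algebra is equivalent, and your substitution computations recovering \eqref{eq:driftfsde} and \eqref{eq:driftbsde} are correct. The paper's direction is, however, the more faithful one to the statement, for two reasons. First, the theorem allows $K_s$ to be \emph{any} progressively measurable process on the new space with bounded discrepancy $D_s$---including, as the paper emphasizes after the corollary, processes $K_s(\omega)=h(s,X_s(\omega),\omega)$ carrying randomness beyond $X_s$---so $D_s$ is in general only defined and adapted on the $\P$-space; defining your $\Q$-martingale $M_T$ requires $D_s$ to already live on the $\Q$-space, which is circular (or forces an enlargement of that space) outside the Markovian case $K_s = h(s,X_s)$. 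Second, the well-posedness hypothesis in the statement concerns \eqref{eq:driftfsde} under $\P$, which the paper uses directly to establish existence and uniqueness for \eqref{eq:driftbsde} before any measure change; your route has to re-derive well-posedness under $\P$ after the fact, which you only sketch. Neither point is fatal in the Markovian setting, but if you keep your direction you should either restrict $K_s$ accordingly or explicitly construct the enlarged space on which $D_s$ is $\Q$-adapted.
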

\begin{proof}
The existence of a square-integrable solution to \eqref{eq:driftfsde} 
    allows the conditions of \cite[Chapter~7, Theorem~3.2]{yong1999stochastic}
    to be satisfied for \eqref{eq:driftbsde}, guaranteeing a unique square-integrable solution
    $(Y_s,Z_s)$.
Now define the processes
\begin{align}
    W_t^\Q &:= W_t^\P - \int_0^t D_s \, \mathrm{d} s, \label{eq:Wdef} \\
    \Theta_t &:= \exp \bigg ( -\frac{1}{2} \int_0^t \| D_s \|^2 \, \mathrm{d} s
    + \int_0^t D_s^\top \mathrm{d}W^\P_s  \bigg ), \label{eq:thetadef}
\end{align}
    for $t \in [0,T]$.
Since $D_s$ is bounded, 
    Girsanov's theorem \cite[Chapter 5, Theorem 10.1]{fleming1976deterministic} 
    implies that the process $W^\Q_s$ defined by \eqref{eq:Wdef} is Brownian in 
    some measure $\Q$ derived from $\P$ in the form of
\begin{align}
    \mathrm{d} \Q &= \Theta_T \, \mathrm{d} \P \text{,} \label{eq:measuredef}
\end{align}
    where $\Theta_T$ be the Radon-Nikodym derivative.
With a simple algebraic reduction, Girsanov's theorem also guarantees separately that
    $X_s$ solves the on-policy FSDE \eqref{eq:fsdeorig}, and that
    $(X_s,Y_s,Z_s)$ solves the on-policy BSDE \eqref{eq:bsdeorig}.
Here, the idea is that the sample functions for the processes are the same, but
    the probability measure (acting on sets of trajectory samples~$\omega$)
    which characterizes their distributions changes.

Since $D_s$ is bounded, it satisfies Novikov's criterion \cite[Theorem~15.4.2]{CohenElliott} and thus, 
    it follows that
    $\Theta_t$ is $\P$-a.s. strictly positive, and further that
    the measures $\P$ and $\Q$
    are equivalent, that is, they are absolutely continuous with respect to the other 
    \cite{lowther_2011}. 
Since \eqref{eq:yzv} holds $\Q$-a.s., 
    there exists an $N \in \F$ such that $E^\mathsf{c} \subseteq N$,
    where $E := \{ \omega \in \Omega : Y_t(\omega) = V^\mu(t,X_t(\omega)) \}$,
    and $\Q(N) = 0$.
It subsequently follows from the definition of absolute continuity that $\P(N) = 0$.
Thus, \eqref{eq:yzv} holds $\P$-a.s. as well. 
\end{proof}

As before, the corresponding relationship over short intervals follows.

\begin{corollary}
    Let $(X_s,Y_s,Z_s)$ be the solution to the drifted \mbox{FBSDE} system
    \eqref{eq:driftfsde} and \eqref{eq:driftbsde}, and define
\begin{align}
    \wh{Y}_{t,\tau} &:= Y_\tau - \Delta \wh{Y}_{t,\tau} \text{,} 
    \label{eq:bsdediffdrifted}
\end{align}
    where $\Delta \wh{Y}_{t,\tau}$ is either
\begin{align}
    \Delta \wh{Y}_{t,\tau}^\textnormal{noisy} 
    &:= - \int_t^\tau (\ell^\mu_s + Z_s^\top D_s) \ds +  \int_t^\tau Z^\top_s \dW_s^\P \text{,} 
    \label{eq:bsdediffnoisydrifted} \\
    \intertext{or} 
    \Delta \wh{Y}_{t,\tau}^\textnormal{noiseless} 
    &:= - \int_t^\tau (\ell^\mu_s + Z_s^\top D_s) \ds \text{.}  
    \label{eq:bsdediffnoiselessdrifted}
\end{align}
    Then,
\begin{align}
    Y_t &= \E_{\P}[\wh{Y}_{t,\tau}^\textnormal{noisy}| X_t]
    = \E_{\P}[\wh{Y}_{t,\tau}^\textnormal{noiseless}| X_t] = V^\mu(t,X_t) \text{,}
    \label{eq:thmcontinfkdrifted}
\end{align}
    $\P$-a.s. for $0 \leq t \leq \tau \leq T$.
\hfill $\square$
\end{corollary}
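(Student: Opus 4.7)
The plan is to follow the argument of Corollary~\ref{thm:DeltaYcorollary} essentially verbatim, but applied in the new probability space $(\Omega,\F,\{\F_t\},\P)$ and to the drifted BSDE~\eqref{eq:driftbsde} rather than the on-policy BSDE~\eqref{eq:bsdeorig}. Theorem~\ref{thm:driftfbsde} already gives us the structural facts we need: existence and uniqueness of the square-integrable solution $(X_s,Y_s,Z_s)$ to \eqref{eq:driftfsde}--\eqref{eq:driftbsde}, the Feynman-Kac identification \eqref{eq:yzv2} $\P$-a.s., and that $W^\P$ is Brownian under $\P$ with $Z$ adapted. So the statement to be proved is, in effect, just a rewriting of the BSDE in integral form plus two standard facts about conditional expectation.

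First, I would integrate the drifted BSDE~\eqref{eq:driftbsde} from $t$ to $\tau$, yielding
\begin{align*}
    Y_\tau - Y_t &= -\int_t^\tau (\ell^\mu_s + Z_s^\top D_s)\,\ds + \int_t^\tau Z_s^\top \dW_s^\P,
\end{align*}
which is exactly $\Delta \wh Y_{t,\tau}^{\text{noisy}}$. Rearranging and comparing to the definition~\eqref{eq:bsdediffdrifted} gives $Y_t = \wh Y_{t,\tau}^{\text{noisy}}$, $\P$-a.s. Next, from~\eqref{eq:yzv2} we have $Y_t = V^\mu(t,X_t)$ $\P$-a.s., so $Y_t$ is $\sigma(X_t)$-measurable; conditioning on $X_t$ therefore leaves $Y_t$ unchanged, giving $Y_t = \E_\P[\wh Y_{t,\tau}^{\text{noisy}} \mid X_t]$.

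Second, to pass from the noisy to the noiseless estimator I would invoke the martingale property of the Itô integral under $\P$. The square-integrability of $Z_s$ guaranteed by Theorem~\ref{thm:driftfbsde} places $\int_t^\tau Z_s^\top \dW_s^\P$ in $\mathcal{M}^2_\P[0,T]$, hence it is a $\P$-martingale with vanishing mean, and by the tower property $\E_\P\bigl[\int_t^\tau Z_s^\top \dW_s^\P \,\big|\, X_t\bigr] = 0$. This is the drifted analog of the step in Corollary~\ref{thm:DeltaYcorollary} that used \cite[Chapter~7, Theorem~3.2]{yong1999stochastic}; here the relevant martingale property holds with respect to $W^\P$ rather than $W^\Q$. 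Subtracting this zero-mean term from $\Delta \wh Y_{t,\tau}^{\text{noisy}}$ inside the conditional expectation yields $\E_\P[\wh Y_{t,\tau}^{\text{noisy}} \mid X_t] = \E_\P[\wh Y_{t,\tau}^{\text{noiseless}} \mid X_t]$, completing the chain of equalities in~\eqref{eq:thmcontinfkdrifted}.

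There is no real obstacle; the only subtlety worth flagging explicitly is that the martingale step must be carried out under $\P$ (the measure in which $W^\P$ is Brownian), which is why Theorem~\ref{thm:driftfbsde} and its Girsanov-based change of measure were required before this corollary could be stated. Everything else is bookkeeping of integral limits and an appeal to measurability of $Y_t$ through $X_t$.
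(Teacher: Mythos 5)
Your proposal is correct and follows essentially the same route as the paper, which simply states that the proof proceeds as in Corollary~\ref{thm:DeltaYcorollary}: integrate the drifted BSDE to identify $Y_t$ with $\wh{Y}_{t,\tau}^{\textnormal{noisy}}$, use the $X_t$-measurability of $Y_t$ from \eqref{eq:yzv2}, and kill the stochastic integral via its martingale property under $\P$. Your explicit flagging that the martingale step must be taken under $\P$ (where $W^\P$ is Brownian) is exactly the one substantive point the paper leaves implicit.
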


\begin{proof}
The proof follows similarly to the proof of Corollary~\ref{thm:DeltaYcorollary}. 
\end{proof}

Further, the discussion in Section~\ref{sec:lsmc} holds true when the measure $\Q$ is replaced with $\P$.
We can interpret this result in the following sense.
As long as the diffusion function $\sigma$ is the same as in the on-policy formulation, we can pick an arbitrary process $K_s$ to be the drift term, which generates a distribution for the forward process $X_s$ in the corresponding measure $\P$. The BSDE yields an expression for $Y_t$ using the same process $W^{\P}_s$ as used in the FSDE. The term $Z^{\top}_s D_s$ acts as a correction in the BSDE to compensate for changing the drift of the FSDE. We can again use the minimization \eqref{eq:alphastar} to approximate the value function $V^\mu$, the only difference being that $(x_t^k,\wh{y}^k_t)$ are now samples approximating the distribution $\P$. 

It should be highlighted that $K_s$ need not be a deterministic function of the random variable $X_s$, as is the case with $f^\mu_s$. For instance, it can be selected as the function \mbox{$K_s(\omega) = h(s,X_s(\omega),\omega)$} for some appropriate function $h$, producing a non-trivial joint distribution for the random variables $(X_t,K_t)$.

\subsection{Euler-Maruyama FBSDE Approximation}

Many approaches to solving the FBSDEs
    propose approximating
    both the forward and backward steps with Euler-Maruyama-like SDE approximations,
    see, for instance, \cite{Bender2010}, \cite{exarchos2018stochastic}, and the survey in \cite{Higham2015}.
For the drifted FSDE the approximation is
\begin{align}
        X_\tau - X_t &= K_t \, \Delta t + \sqrt{\Delta t} \, \sigma_t \, \Delta W_t \text{,} 
\end{align}
where $\Delta t := \tau - t$ and $\Delta W_t \sim \mathcal{N}(0, I_n)$. 
For the drifted BSDE step we have
\begin{align}
    \wh{Y}_{t,\tau} 
    &= V(\tau,X_\tau) - \Delta \wh{Y}_{t,\tau} \text{,} 
    \label{eq:bsdediffdrifted2}
\end{align}
    where $\Delta \wh{Y}_{t,\tau}$ is either
\begin{align}
    \Delta \wh{Y}_{t,\tau}^\textnormal{noisy} 
    &= -(\ell^\mu_t + Z_{\tau}^\top D_t) \, \Delta t + Z^\top_{\tau} \sqrt{\Delta t} \, \Delta W_t  \text{,} 
    \label{eq:eulermarunoisy} \\
    \intertext{or} 
    \Delta \wh{Y}_{t,\tau}^\textnormal{noiseless} 
    &= -(\ell^\mu_t + Z_{\tau}^\top D_t) \, \Delta t  \text{.} \label{eq:eulermarunoiseless}
\end{align}
The variable $Z_\tau$ is evaluated at the end of the interval so that it can utilize the latest
    approximation of the value function gradient.
Note also that the on-policy Euler-Maruyama estimators arise when
    $K_t \equiv F^\mu_t$ and thus $D_t \equiv 0$.
The primary contribution of this paper, discussed in the next section, 
    is to propose new estimators for $\wh{Y}_{t,\tau}$
    to be used in the LSMC function regression step.

\section{Forward-Backward Difference Equations} \label{sec:fbdeqs}

In the previous section we presented results from continuous-time FBSDE theory, then used standard
    methods in SDE approximation to form a discrete-time approximation of the forward and backward
    SDEs.
In this section we propose the converse approach: we begin by forming a discrete-time approximation
    of the dynamics and the value function, then we derive relationships which resemble those
    arrived at previously.
In doing so, we make two contributions: first, we arrive at better estimators 
    compared to the direct discretization of the continuous time relations because we are
    able to exploit characteristics of the discrete-time formulation obscured by the continuous-time problem,
    and, secondly, we provide a discrete-time intuition for the continuous-time
    theory.

\subsection{Discrete Time SOC Approximation} \label{sec:dtsocapprox}

The interval $[0,T]$ is partitioned into $N$ subintervals of length $\Delta t$ with the partition
    $\{ t_0 = 0, t_1 = \Delta t, ..., t_{N-1} = T - \Delta t, t_N = T \}$.
We abbreviate variables $X_{t_i} =: X_i$ for brevity.
Let ${(\wt{\Omega}, \wt{\F}, \{\wt{\F}_i\}_{i \in \{0,\ldots,N\}}, \wt{\Q})}$ 
    be the discrete-time filtered probability space
    and let $\{W_i^\Q\}_{i=0}^{N-1}$ be a discrete time Brownian process in $\wt{\Q}$, that is,
    $W_i^\Q \sim \mathcal{N}(0,I_n)$ is normally distributed, $\wt{\F}_{i+1}$-measurable,
    and $\{W_i^\Q\}$ are mutually independent.
The on-policy forward stochastic difference equation is 
\begin{align}
    X_{i+1} - X_i &= F^\mu_i + \Sigma_i W^\Q_i \text{,} & X_0 &= x_0 \text{,}
    \label{eq:onpolicyfsdedt}
\end{align}
where, using the Euler-Maruyama approximation method,\footnote{Or some other 
    approximation scheme that results in the form \eqref{eq:onpolicyfsdedt}, \eqref{eq:onpolicyvalfn}.} 
\begin{align}
    F^\mu_i &= f(t_i,X_i,\mu_i(X_i)) \Delta t, &
    \Sigma_i &= \sigma(t_i,X_i) \sqrt{\Delta t} \text{,}
\end{align}
    and the on-policy value function is
\begin{align}
    V^\mu_i(X_i) &= \E_{\wt{\Q}}[\, \sum^{N-1}_{j=i} L^\mu_j  + g(X_N) \, | X_i] \text{,}
    \label{eq:onpolicyvalfn}
\end{align}
    where
\begin{align}
    L^\mu_j &= \ell(t_j,X_j,\mu_j(X_j)) \Delta t \text{.}
\end{align}
%

According to \cite[Chapter 10, Theorem 10.2.2]{kloeden2013numerical},
    when a linear growth  condition in $x$ is imposed on $f^\mu_s$, $\sigma_s$, and $\ell^\mu_s$
    along with a few other conditions, then it can be shown 
    that the absolute error between the Euler-Maruyama approximation $X_i$ and 
    the continuous forward process $X_t$ is 
    of order
    $\mathcal{O}((\Delta t)^{\sfrac{1}{2}})$.
When $\sigma_s$ is constant with respect to $x$, 
    the error bound improves to 
    $\mathcal{O}(\Delta t)$
    \cite[Chapter 10, Theorem 10.3.5]{kloeden2013numerical}.

\subsection{Discrete-Time BSDE Approximation}

For the discrete-time value function $\{V_i^\mu\}$ and forward process $\{X_i\}$
    we define the process $\{Y_i := V^\mu_i(X_i) \}$.
Further, we define the term $\Delta Y_i$ as one that satisfies the backward difference,
\begin{align}
    \Delta Y_i &:= Y_{i+1} - Y_i,
    \label{eq:truebackstep}
\end{align}
where we use separate estimators ${\wh{Y}_{i+1} \approx Y_{i+1}}$ and 
    $\Delta \wh{Y}_i \approx \Delta Y_i$ to obtain a combined estimator
\begin{align}
    \wh{Y}_i &:= \wh{Y}_{i+1} - \Delta \wh{Y}_i \text{.} \label{eq:backstep}
\end{align}
    with the interpretation $\wh{Y}_{i} \approx V^\mu_{i}(X_{i})$.
Both $\wh{Y}_{i+1}$ and $\Delta \wh{Y}_i$ can be chosen according to different approximation
    schemes; these choices are investigated below.
These approximation schemes assume the availability of some approximate representation of the 
    value function at the next step
    $\wt{V}^\mu_{i+1} \approx V^\mu_{i+1}$,
    as well as its derivatives, and they produce a representation 
    $\wt{V}^\mu_{i} \approx V^\mu_{i}$ using LSMC.

\subsection{On-Policy Taylor-Expanded Backward Difference}\label{sec:taylorBSDE}

We now propose an estimator for $\Delta \wh{Y}_i$, the discrete analogue to the
    on-policy terms defined in \eqref{eq:bsdediffnoisy} and \eqref{eq:bsdediffnoiseless}.
We begin by noting that the on-policy value function satisfies the on-policy Bellman equation
\begin{align}
    V^\mu_i(X_i) &= 
    L^\mu_i + \E_{\wt{\Q}}[ V^\mu_{i+1}(X_{i+1}) | X_i] \label{eq:opbellman}
    \text{.}  
\end{align}
Consider the second-order Taylor expansion of the approximation $\wt{V}^\mu_{i+1} \approx V^\mu_{i+1}$ 
    of the term inside the 
    conditional expectation,
\begin{align}
    \wt{V}^\mu_{i+1}(X_{i+1}) 
    &= \wt{V}^\mu_{i+1}(\ol{X}^\Q_{i+1} + \Sigma_i W_i^\Q) 
    = \wt{Y}_{i+1}
    + \delta^{\rm h.o.t.}_{i+1} \text{,} \label{eq:errwtY} \\
    \wt{Y}_{i+1} &:= \ol{Y}_{i+1}
        + \ol{Z}_{i+1}^\top W^\Q_i 
        + \frac{1}{2} W^{\Q \top}_i \ol{M}_{i+1} W^\Q_i \label{eq:wtY} 
    \text{,} 
\end{align}
    centered at the conditional mean, 
\begin{align}
    \ol{X}^\Q_{i+1} &:= \E_{\wt{\Q}}[X_{i+1} | X_i] =  X_i + F^\mu_i \text{,}
\end{align}
    where
\begin{align}
    \ol{Y}_{i+1} &:= \wt{V}^\mu_{i+1}(\ol{X}^\Q_{i+1}) \text{,} \label{eq:defY} \\
    \ol{Z}_{i+1} &:= \Sigma_i^\top \partial_x \wt{V}^\mu_{i+1}(\ol{X}^\Q_{i+1}) \text{,} \label{eq:defZ} \\
    \ol{M}_{i+1} &:= \Sigma_i^\top \partial_{xx} \wt{V}^\mu_{i+1}(\ol{X}^\Q_{i+1}) \Sigma_i 
    \label{eq:defM} \text{,}
\end{align}
    and $\delta^{\rm h.o.t.}_{i+1}$ includes the third and higher order terms in the Taylor
    series expansion.
Substituting $\wt{Y}_{i+1}$ in for $V^\mu_{i+1}(X_{i+1})$ in \eqref{eq:opbellman} and rearranging
    terms, and in light of \eqref{eq:backstep}, we arrive at an estimator for the backward step
\begin{align}
    \Delta \wh{Y}_{i}^{\textnormal{taylor}} &:= 
    - L^\mu_i + \ol{Z}_{i+1}^\top W^\Q_i \nonumber \\
    &\quad + \frac{1}{2} \tr(\ol{M}_{i+1} (W^\Q_i W^{\Q \top}_i - I)) \text{.}
    \label{eq:dtbsde}
\end{align}
For the purposes of comparison we restate the on-policy Euler-Maruyama estimators derived 
    in the previous section,
\begin{align}
    \Delta \wh{Y}_{i}^{\textnormal{noisy-em}} &:= 
    - L^\mu_i + \wt{Z}_{i+1}^\top W^\Q_i  \text{,} \\
    \Delta \wh{Y}_{i}^{\textnormal{nless-em}} &:= 
    - L^\mu_i  \text{,} 
\end{align}
    where
\begin{align}
    \wt{Z}_{i+1} &:= \Sigma_i^\top \partial_x \wt{V}^\mu_{i+1}(X_{i+1}) \text{.} 
\end{align}
There are two differences in the proposed Taylor series expansion approach compared to the 
    Euler-Maruyama approach.
First, the gradient of the value function is evaluated at $\ol{X}^\Q_{i+1}$ instead of $X_{i+1}$.
This effect can be exploited because in the discrete-time approach the difference equation separates the
    drift step and the diffusion step, whereas in the continuous-time approach the drift and
    diffusion are considered inseparable.
However, if the continuous-time SDEs are eventually discretized using Euler-Maruyama, this
    assumption is broken over short intervals.
Secondly, the trace term now appears in the Taylor-expansion estimator.
While in the continuous-time counterpart second-order effects are infinitesimally small, they can no longer be ignored in the discrete-time approximation.
Note, however, that 
$\E_{\Q}\big[\frac{1}{2} \tr(\ol{M}_{i+1} (W^\Q_i W^{\Q \top}_i - I))\big| X_i \big] = 0$
 since $\E_{\Q}[W^\Q_i W^{\Q \top}_i | X_i] = I$ and $\ol{M}_{i+1}$ is $X_i$-measurable.

The following theorem suggests that this choice of approximation of $\Delta Y_i$ has relatively
    small residual error.
    
\begin{theorem} \label{thm:deltaest}
    The choice $\Delta \wh{Y}_{i}^{\textnormal{taylor}}$ in \eqref{eq:dtbsde} is an unbiased estimator of the actual 
    value function difference $\Delta Y_i$, i.e.,
\begin{align}
    \E_{\wt{\Q}}[\Delta \wh{Y}_i|X_i] 
    &= \E_{\wt{\Q}}[\Delta Y_i|X_i] \text{.} \label{eq:unbias1}
\end{align}
Further, the residual error is
\begin{align}
    \Delta Y_i - \Delta \wh{Y}_i &= 
    \delta^{\Delta \wh{Y}}_{i+1}
    - \E_{\wt{\Q}}[\delta^{\Delta \wh{Y}}_{i+1} |X_i] \text{,} \label{eq:deltadelta1} \\
    \delta^{\Delta \wh{Y}}_{i+1} &:= \delta^{\wt{V}}_{i+1} + \delta^\textnormal{h.o.t.}_{i+1}
    \text{,} \label{eq:deltawhydef}
\end{align}
    where $\delta^{\wt{V}}_{i+1} := V^\mu_{i+1}(X_{i+1}) - \wt{V}^\mu_{i+1}(X_{i+1})$ 
    is the error in the $(i+1)^{\text{st}}$ step value function representation.
\hfill $\square$
\end{theorem}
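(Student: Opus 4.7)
The plan is to prove the residual identity \eqref{eq:deltadelta1} directly; the unbiasedness \eqref{eq:unbias1} will then follow immediately by taking the conditional expectation of both sides, since the right-hand side of \eqref{eq:deltadelta1} has zero conditional mean by construction.

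To establish \eqref{eq:deltadelta1}, I would start from $\Delta Y_i = V^\mu_{i+1}(X_{i+1}) - V^\mu_i(X_i)$ and handle the two terms separately. For $V^\mu_{i+1}(X_{i+1})$, I would add and subtract the approximation $\wt V^\mu_{i+1}(X_{i+1})$, introducing the error $\delta^{\wt V}_{i+1}$, and then invoke the Taylor expansion \eqref{eq:errwtY}--\eqref{eq:wtY} of $\wt V^\mu_{i+1}$ about $\ol X^\Q_{i+1} = X_i + F^\mu_i$ to obtain
\begin{equation*}
V^\mu_{i+1}(X_{i+1}) = \ol Y_{i+1} + \ol Z_{i+1}^\top W^\Q_i + \tfrac{1}{2} W^{\Q\top}_i \ol M_{i+1} W^\Q_i + \delta^{\Delta\wh Y}_{i+1},
\end{equation*}
with $\delta^{\Delta\wh Y}_{i+1}$ exactly the sum in \eqref{eq:deltawhydef}. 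For $V^\mu_i(X_i)$, I would apply the Bellman identity \eqref{eq:opbellman} and substitute the same expansion inside the conditional expectation. Since $\ol Y_{i+1}, \ol Z_{i+1}, \ol M_{i+1}$ are $X_i$-measurable, and $\E_{\wt\Q}[W^\Q_i|X_i] = 0$ while $\E_{\wt\Q}[W^\Q_i W^{\Q\top}_i|X_i] = I$, the linear term drops out and the quadratic term contributes $\tfrac{1}{2}\tr(\ol M_{i+1})$, yielding
\begin{equation*}
V^\mu_i(X_i) = L^\mu_i + \ol Y_{i+1} + \tfrac{1}{2}\tr(\ol M_{i+1}) + \E_{\wt\Q}[\delta^{\Delta\wh Y}_{i+1}|X_i].
\end{equation*}

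Subtracting these two expressions and using the cyclic trace identity $W^{\Q\top}_i \ol M_{i+1} W^\Q_i - \tr(\ol M_{i+1}) = \tr(\ol M_{i+1}(W^\Q_i W^{\Q\top}_i - I))$ reconstructs exactly $\Delta \wh Y_i^{\textnormal{taylor}}$ from \eqref{eq:dtbsde}, plus the residual $\delta^{\Delta\wh Y}_{i+1} - \E_{\wt\Q}[\delta^{\Delta\wh Y}_{i+1}|X_i]$, which is precisely \eqref{eq:deltadelta1}. Taking $\E_{\wt\Q}[\cdot|X_i]$ of this identity then immediately yields \eqref{eq:unbias1}.

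The main bookkeeping obstacle will be aligning the raw quadratic Taylor term $\tfrac{1}{2}W^{\Q\top}_i \ol M_{i+1} W^\Q_i$ with the centered form $\tfrac{1}{2}\tr(\ol M_{i+1}(W^\Q_i W^{\Q\top}_i - I))$ built into the estimator; the necessary offset $-\tfrac{1}{2}\tr(\ol M_{i+1})$ is supplied precisely by the Bellman conditional expectation. Because all higher-order and approximation errors are absorbed by definition into $\delta^{\Delta\wh Y}_{i+1}$, no analytic estimates are required, and the proof reduces to symbol pushing together with the two elementary Gaussian moments of $W^\Q_i$.
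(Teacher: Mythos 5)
Your proposal is correct and follows essentially the same route as the paper's proof: both rest on the identity $Y_{i+1}=\wt Y_{i+1}+\delta^{\Delta\wh Y}_{i+1}$ from the Taylor expansion, the Bellman equation \eqref{eq:opbellman}, and the Gaussian moments $\E_{\wt\Q}[W^\Q_i|X_i]=0$, $\E_{\wt\Q}[W^\Q_i W^{\Q\top}_i|X_i]=I$ that make $\E_{\wt\Q}[\wt Y_{i+1}|X_i]=\ol Y_{i+1}+\tfrac12\tr(\ol M_{i+1})$ and hence $\Delta\wh Y_i=-L^\mu_i+\wt Y_{i+1}-\E_{\wt\Q}[\wt Y_{i+1}|X_i]$. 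The only difference is bookkeeping order (you expand $\Delta Y_i$ directly rather than first rewriting $\Delta\wh Y_i$), and the unbiasedness conclusion by conditioning \eqref{eq:deltadelta1} is exactly the paper's argument.
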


\begin{proof}
The relationship \eqref{eq:unbias1} follows directly from taking the conditional expectation
    $\E_{\wt{\Q}}[\, \cdot \, |X_i]$ of both sides of \eqref{eq:deltadelta1}.
We now show \eqref{eq:deltadelta1}.

Comparing \eqref{eq:dtbsde} with \eqref{eq:wtY}, it can be easily shown that
\begin{align}
    \Delta \wh{Y}_i = - L^\mu_i + \wt{Y}_{i+1} - \E_{\wt{\Q}}[\wt{Y}_{i+1}|X_i] \text{,}
    \label{eq:deltayi}
\end{align}
    and, similarly, the Taylor expansion \eqref{eq:errwtY} immediately yields
    $Y_{i+1} = \wt{Y}_{i+1} + \delta^{\Delta \wh{Y}}_{i+1}$.
Combining these two expressions yields
\begin{align}
    \Delta \wh{Y}_i &= - L^\mu_i + Y_{i+1} - \delta^{\Delta \wh{Y}}_{i+1} 
    - \E_{\wt{\Q}}[Y_{i+1} - \delta^{\Delta \wh{Y}}_{i+1}|X_i] \text{.}
\end{align}
Substituting in the Bellman equation \eqref{eq:opbellman} and rearranging we
    arrive at \eqref{eq:deltadelta1}.
\end{proof}

In general, the Taylor expansion residual $\delta^\textnormal{h.o.t.}_{i+1}$ has a small mean due
    to the following result.
\begin{proposition} \label{cor:odd}
    Of the higher order terms in the Taylor expansion residual $\delta^\textnormal{h.o.t.}_{i+1}$,
        the terms with odd order, 
        starting with the third order term, have zero 
        conditional expectations given $X_i$.
\hfill $\square$
\end{proposition}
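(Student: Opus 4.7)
The plan is to exploit the symmetry of the Gaussian increments $W_i^\Q$ and their independence from $X_i$. Recall that the residual $\delta^\textnormal{h.o.t.}_{i+1}$ arises from the Taylor expansion \eqref{eq:errwtY} of $\wt{V}^\mu_{i+1}$ about $\ol{X}^\Q_{i+1}$ evaluated at $X_{i+1} = \ol{X}^\Q_{i+1} + \Sigma_i W_i^\Q$, so the $k$-th order term has the schematic form
\begin{equation*}
    \mathcal{T}_k \;=\; \tfrac{1}{k!}\, \partial_x^{(k)} \wt{V}^\mu_{i+1}(\ol{X}^\Q_{i+1})\bigl[\Sigma_i W_i^\Q, \ldots, \Sigma_i W_i^\Q\bigr],
\end{equation*}
a $k$-fold contraction of the $k$-th derivative tensor with copies of $\Sigma_i W_i^\Q$.

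The first step is to observe that $\ol{X}^\Q_{i+1} = X_i + F^\mu_i$ and $\Sigma_i$ are $\wt{\F}_i$-measurable (hence $X_i$-measurable under the Markov assumption), so the coefficient tensor $\partial_x^{(k)} \wt{V}^\mu_{i+1}(\ol{X}^\Q_{i+1})$ and the matrix $\Sigma_i$ can be pulled out of the conditional expectation $\E_{\wt{\Q}}[\,\cdot\,|X_i]$. What remains inside is a homogeneous polynomial of degree $k$ in the components of $W_i^\Q$, and since $W_i^\Q \sim \mathcal{N}(0,I_n)$ is independent of $\wt{\F}_i$, its conditional moments coincide with its unconditional moments.

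The second step is to use the symmetry $W_i^\Q \stackrel{d}{=} -W_i^\Q$ of the standard Gaussian. Any monomial $\prod_j (W_i^\Q)_{j}^{\alpha_j}$ with $\sum_j \alpha_j = k$ odd flips sign under this symmetry, so its expectation vanishes; by linearity every odd-degree homogeneous polynomial has zero mean. Consequently $\E_{\wt{\Q}}[\mathcal{T}_k | X_i] = 0$ for every odd $k \ge 3$.

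The only technical subtlety — which I would flag but not dwell on — is making the ``schematic'' Taylor remainder rigorous: one should write the expansion with a Lagrange/integral remainder, but since the statement concerns only the odd-order polynomial terms (not the final remainder), it suffices to treat each such term individually as above. No substantive obstacle arises here; the entire argument is a one-line application of Gaussian symmetry together with $X_i$-measurability of the Taylor coefficients.
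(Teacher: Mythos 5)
Your proof is correct and follows essentially the same route as the paper's: pull the $X_i$-measurable Taylor coefficients and $\Sigma_i$ out of the conditional expectation, then observe that the remaining odd-degree homogeneous polynomial in $W_i^\Q$ has zero mean. The only cosmetic difference is that you invoke the sign-flip symmetry $W_i^\Q \stackrel{d}{=} -W_i^\Q$ globally, whereas the paper factors the expectation over the independent coordinates and uses that at least one coordinate appears to an odd power; both are valid one-line justifications of the same fact.
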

\begin{proof}
    See Appendix \ref{sec:cor_odd}.
\end{proof}

Further, under a very basic function approximation scheme, we can entirely dismiss the term $\delta^\textnormal{h.o.t.}_{i+1}$.

\begin{proposition} \label{cor:quadzero}
    If the value function approximation $\wt{V}^\mu_{i+1}$ is quadratic then
    $\delta^\textnormal{h.o.t.}_{i+1} \equiv 0$.
    Thus, the residual error is determined entirely by the residual error of the
    function approximation of $V^\mu_{i+1}$,
\begin{align}
    \Delta Y_i - \Delta \wh{Y}^{\textnormal{taylor}}_i = 
    \delta^{\wt{V}}_{i+1}
    - \E_{\wt{\Q}}[\delta^{\wt{V}}_{i+1} |X_i] \label{eq:resid3} 
    \text{.}
\end{align}
\hfill $\square$
\end{proposition}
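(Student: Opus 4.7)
The proof proposal is essentially a one-line observation combined with a substitution into the result of Theorem~\ref{thm:deltaest}, so the plan is brief.

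First, I would argue that the hypothesis directly kills the higher-order residual. Recall that $\delta^\textnormal{h.o.t.}_{i+1}$ is defined via the Taylor expansion
\begin{align*}
    \wt{V}^\mu_{i+1}(\ol{X}^\Q_{i+1} + \Sigma_i W^\Q_i)
    &= \wt{Y}_{i+1} + \delta^\textnormal{h.o.t.}_{i+1},
\end{align*}
where $\wt{Y}_{i+1}$ in \eqref{eq:wtY} collects the zeroth, first, and second order terms of the expansion of $\wt{V}^\mu_{i+1}$ about $\ol{X}^\Q_{i+1}$. If $\wt{V}^\mu_{i+1}$ is a (at most) quadratic polynomial in $x$, then all partial derivatives of $\wt{V}^\mu_{i+1}$ of order three or higher are identically zero, and so its Taylor polynomial of degree two about any point coincides exactly with the function. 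Consequently, the Lagrange remainder $\delta^\textnormal{h.o.t.}_{i+1}$ is identically zero on $\wt{\Omega}$, as claimed.

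Next, I would substitute this fact into the residual formula established in Theorem~\ref{thm:deltaest}. From \eqref{eq:deltawhydef}, $\delta^{\Delta \wh{Y}}_{i+1} = \delta^{\wt{V}}_{i+1} + \delta^\textnormal{h.o.t.}_{i+1} = \delta^{\wt{V}}_{i+1}$, and plugging this into \eqref{eq:deltadelta1} yields \eqref{eq:resid3} immediately.

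There is no real obstacle here: the proposition is a corollary of Theorem~\ref{thm:deltaest} together with the elementary observation that the Taylor polynomial of degree two of a quadratic function is the function itself. The only point worth flagging is to be explicit that ``quadratic'' is meant in the polynomial sense (degree at most two in $x$ with coefficients depending only on $t_{i+1}$ and the approximation parameters), so that third and higher partial derivatives in $x$ vanish and the argument indeed applies pointwise on $\wt{\Omega}$ rather than only in expectation.
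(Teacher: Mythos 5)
Your proposal is correct and matches the paper's own argument: the paper likewise proves this proposition by noting that a quadratic $\wt{V}^\mu_{i+1}$ has an exact second-order Taylor expansion, so $\delta^\textnormal{h.o.t.}_{i+1} \equiv 0$, and \eqref{eq:resid3} then follows from substituting into the residual formula of Theorem~\ref{thm:deltaest}. Your additional remark clarifying that ``quadratic'' means polynomial of degree at most two in $x$ is a reasonable explicitation of what the paper leaves implicit.
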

\begin{proof}
    This is a direct consequence of the fact that if $\wt{V}^\mu_{i+1}$ is quadratic
    then its second order Taylor expansion is exact.
\end{proof}

Note that this does not require the true value function to be quadratic, only its approximation.
Although using a less expressive representation improves the error coming from
    the term $\delta^\textnormal{h.o.t.}_{i+1}$, there may be a trade-off in terms of increasing the magnitude of the error
    in $\delta^{\wt{V}}_{i+1}$, since the function $V^\mu_{i+1}$ might be less appropriately modeled.

The most remarkable aspect of Proposition~\ref{cor:quadzero} is that it suggests that for
    linear-quadratic-regulator (LQR) problems these estimators are exact up to function
    approximation error, due to the fact that for LQR problems $V^\mu_i$ itself
    is in the class of quadratic functions.
This provides a fundamental guarantee for these estimators.
On the contrary, the Euler-Maruyama estimators are not exact when applied to LQR problems.

\begin{remark} \label{cor:quadzero2}
    If the value function approximation $\wt{V}^\mu_{i+1}$ is quadratic,
    the residual error of the Euler-Maruyama estimators is 
\begin{align}
    &\Delta Y_i - \Delta \wh{Y}^{\textnormal{noisy-em}}_i \nonumber \\
    &\quad = 
    \delta^{\wt{V}}_{i+1}
    - \E_{\wt{\Q}}[\delta^{\wt{V}}_{i+1} |X_i] 
    + (\ol{Z}_{i+1} - \wt{Z}_{i+1})^\top W^\Q_i \nonumber\\
    &\quad \quad + \frac{1}{2} \tr(\ol{M}_{i+1} (W^\Q_i W^{\Q \top}_i - I))
    \label{eq:resid4} 
    \text{.}
\end{align}
\hfill $\square$
\end{remark}
Though all three $\Delta Y_i$ estimators are unbiased, the Taylor-expansion
    estimator is theoretically far superior on the baseline LQR problem.
In numerical experiments illustrated later we confirm this near-machine precision performance
    of the Taylor estimator and the divergence of the EM estimators on the same LQR problem.


\subsection{Estimators of $\wh{Y}_{i+1}$ }\label{sec:options}

We propose two potential estimators for $\wh{Y}_{i+1} \approx V^\mu_{i+1}(X_{i+1})$.
First, we propose using the value function approximation associated with the previous backward
    step to re-estimate the $\wh{Y}_{i+1}$ values,
\begin{align}
    \wh{Y}_{i+1}^{\textnormal{re-est}} &:= \wt{V}^\mu_{i+1}(X_{i+1}) \text{.} \label{eq:reinit} 
\end{align}
Alternatively, we can also use the estimator 
\begin{align}
    \wh{Y}_{i+1}^{\textnormal{noiseless}} &:= \wt{Y}_{i+1} \text{,} \label{eq:noiseest} 
\end{align}
which ends up cancelling out the terms with $W^\Q_i$ in them, so that \eqref{eq:backstep}
    reduces to
\begin{align}
    \wh{Y}^{\textnormal{noiseless}}_i 
    &= L^\mu_i + \ol{Y}_{i+1} + \frac{1}{2} \tr(\ol{M}_{i+1}) \text{.} 
    \label{eq:noiseless}
\end{align}
The following theorem establishes the error analysis of the two Taylor-expansion-based
    estimators.
\begin{theorem} \label{thm:biasvarhat}
For the estimator $\wh{Y}_i := \wh{Y}_{i+1} - \Delta \wh{Y}_i$, where
$\Delta \wh{Y}_i$ is defined in \eqref{eq:dtbsde} and $\wh{Y}_{i+1}$
is defined in \eqref{eq:reinit} or \eqref{eq:noiseest}, the bias is
\begin{align}
    \E_{\wt{\Q}}[Y_i - \wh{Y}^\textnormal{re-est}_i |X_i]
    &= \E_{\wt{\Q}}[\delta^{\wt{V}}_{i+1} |X_i] 
    \text{,} \\
    \E_{\wt{\Q}}[Y_i - \wh{Y}^\textnormal{noiseless}_i |X_i] 
    & = \E_{\wt{\Q}}[\delta^{\wt{V}}_{i+1} + \delta^\textnormal{h.o.t.}_{i+1} |X_i]  
    \text{.} 
\end{align}
    The variances of these estimators are
\begin{align}
    \var_{\wt{\Q}}[\wh{Y}^\textnormal{re-est}_i |X_i]
    &= \var_{\wt{\Q}}[\delta^\textnormal{h.o.t.}_{i+1} |X_i]
    \text{,} \\
    \var_{\wt{\Q}}[\wh{Y}^\textnormal{noiseless}_i |X_i]
    &= 0
    \text{.} 
\end{align}
\hfill $\square$
\end{theorem}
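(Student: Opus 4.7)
The plan is to reduce both estimators to convenient closed forms that separate $X_i$-measurable terms from the stochastic residuals, then read off bias and variance.

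First I would use equation \eqref{eq:deltayi}, which gives
$\Delta \wh{Y}_i = -L^\mu_i + \wt{Y}_{i+1} - \E_{\wt{\Q}}[\wt{Y}_{i+1}|X_i]$,
together with the Taylor expansion \eqref{eq:errwtY} in the form $\wt{V}^\mu_{i+1}(X_{i+1}) = \wt{Y}_{i+1} + \delta^{\textnormal{h.o.t.}}_{i+1}$. Substituting into \eqref{eq:backstep} for each choice of $\wh{Y}_{i+1}$ yields
\begin{align*}
\wh{Y}^{\textnormal{re-est}}_i &= L^\mu_i + \E_{\wt{\Q}}[\wt{Y}_{i+1}|X_i] + \delta^{\textnormal{h.o.t.}}_{i+1}, \\
\wh{Y}^{\textnormal{noiseless}}_i &= L^\mu_i + \E_{\wt{\Q}}[\wt{Y}_{i+1}|X_i].
\end{align*}
The variance claims are then immediate: both $L^\mu_i$ and $\E_{\wt{\Q}}[\wt{Y}_{i+1}|X_i]$ are $X_i$-measurable, so they contribute nothing to the conditional variance. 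For the re-est estimator only $\delta^{\textnormal{h.o.t.}}_{i+1}$ is random, giving $\var_{\wt{\Q}}[\wh{Y}^{\textnormal{re-est}}_i|X_i] = \var_{\wt{\Q}}[\delta^{\textnormal{h.o.t.}}_{i+1}|X_i]$; for the noiseless one nothing random remains, so the variance is zero.

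For the bias, I would invoke the discrete Bellman identity \eqref{eq:opbellman}, which says $Y_i = L^\mu_i + \E_{\wt{\Q}}[V^\mu_{i+1}(X_{i+1})|X_i]$. Using $V^\mu_{i+1}(X_{i+1}) = \wt{V}^\mu_{i+1}(X_{i+1}) + \delta^{\wt{V}}_{i+1} = \wt{Y}_{i+1} + \delta^{\textnormal{h.o.t.}}_{i+1} + \delta^{\wt{V}}_{i+1}$, this rewrites as
\[
Y_i = L^\mu_i + \E_{\wt{\Q}}[\wt{Y}_{i+1}|X_i] + \E_{\wt{\Q}}[\delta^{\textnormal{h.o.t.}}_{i+1} + \delta^{\wt{V}}_{i+1}|X_i].
\]
Subtracting the two closed forms derived above and taking conditional expectation (noting that $\E_{\wt{\Q}}[\delta^{\textnormal{h.o.t.}}_{i+1}|X_i]$ is itself $X_i$-measurable so it passes through the expectation) immediately gives the two stated bias expressions.

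There is no serious obstacle here; the only care needed is bookkeeping between the quantities $\wt{Y}_{i+1}$, $\wt{V}^\mu_{i+1}(X_{i+1})$, and $V^\mu_{i+1}(X_{i+1})$, which differ by $\delta^{\textnormal{h.o.t.}}_{i+1}$ and $\delta^{\wt{V}}_{i+1}$ respectively. The key conceptual step — and the reason the noiseless estimator attains zero conditional variance — is the observation that the $\wt{Y}_{i+1}$ appearing inside $\Delta \wh{Y}_i$ exactly cancels the $\wt{Y}_{i+1}$ used as $\wh{Y}_{i+1}^{\textnormal{noiseless}}$, leaving only $X_i$-measurable terms; this cancellation is what distinguishes the two estimators.
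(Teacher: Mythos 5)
Your proof is correct and follows essentially the same route as the paper's: the same decomposition into $X_i$-measurable terms plus the residuals $\delta^{\textnormal{h.o.t.}}_{i+1}$ and $\delta^{\wt{V}}_{i+1}$, using \eqref{eq:deltayi}, the expansion \eqref{eq:errwtY}, and the Bellman equation \eqref{eq:opbellman}. The only organizational difference is that the paper obtains the result as the specialization $\wt{\P}\equiv\wt{\Q}$ of the drifted case (proving Theorems~\ref{thm:biasvarhat} and \ref{thm:newests} jointly via Lemma~\ref{thm:driftdeltaest}), whereas you compute the closed forms of the two estimators directly; the substance is identical.
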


\begin{proof}
    See Appendix \ref{sec:ProofThm32n33}.
\end{proof}

This theorem shows that the \textit{re-estimate} condition has less bias than the
    \textit{noiseless} condition, but it is a higher variance estimator.
We also observe that when $\delta^\textnormal{h.o.t.}_{i+1} = 0$ the bias
    and variance of these two estimators are identical.
However, since it is not immediately clear which condition is superior when this is not true, 
    we examine both
    methods and compare the results in Section~\ref{sec:expresults}.

\subsection{Drifted Taylor-Expanded Backward Difference}

We now offer a discrete-time approximation of the drifted off-policy FBSDEs.
Let ${(\wt{\Omega}, \wt{\F}, \{\wt{\F}_i\}_{i \in \{0,\ldots,N\}}, \wt{\P})}$
    be an alternative discrete-time filtered probability space where $W_i^\P$ is
    the associated Brownian process.
Define on this space the difference equation 
\begin{align}
    X_{i+1} - X_i &= K_i + \Sigma_i W^\P_i \text{,} & X_0 &= x_0 ,
    \label{eq:driftfsdedt}
\end{align}
    where the process $\{K_i\}_{i=0}^{N-1}$ is chosen at will,
    $\wt{\F}_{i+1}$-measurable,
    and independent of $W_i^\P$. 
For example, $K_i$ can be constructed using the function 
    $K_i(\omega) = \mathcal{K}_i(X_i(\omega),\xi_i(\omega))$, where 
    $\{\xi_i\}$ is some random process where $\xi_i$ is $\wt{\F}_{i+1}$-measurable
    and independent of $W_i^\P$ (but not necessarily of $W_{i-1}^\P$). 
Each $K_i$ must also be selected such that
\begin{align}
    D_i &:= \Sigma_i^{-1} (F^\mu_i - K_i) \text{,}
\end{align}
    is bounded.

Similar to the construction in Section~\ref{sec:driftfbsde},
    a discrete time version of Girsanov's theorem can be used to produce the measure
    $\wt{\Q}$ which satisfies the assumptions of Section~\ref{sec:dtsocapprox} 
    and show how the drifted forward difference 
    \eqref{eq:driftfsdedt}
    can be transformed to the on-policy forward difference \eqref{eq:onpolicyfsdedt}.
    To this end, 
define the sequence of measures
\begin{align}
    \mathrm{d} \wt{\Q}_{i+1} = \Theta_{i+1} \mathrm{d} \wt{\P}_{i+1} \label{eq:dtmeas}
    \text{,}
\end{align}
    for $i=0, \ldots, N-1$, where 
    $\Theta_{i+1}$ is the discrete time version of \eqref{eq:thetadef} defined as
\begin{align}
    \Theta_{i+1} 
    &:= \exp \bigg (\sum_{j=0}^i \big(-\frac{1}{2} \| D_j \|^2 + D_j^\top W_j^\P \big) \bigg) \text{,} 
\end{align}
    where
    $\Theta_0 := 1$.
Further, as in \eqref{eq:Wdef}, define the process
\begin{align}
    W^\Q_i := W^\P_i - D_i \label{eq:wqwqdt}
    \text{,}
\end{align}
    for $i = 0, \ldots, N-1$.
    
\begin{lemma}[Discrete-Time Girsanov] \label{lemma:dtgirsanov}
Let $\{D_i\}_{i=0}^{N-1}$ be a sequence of bounded and $\wt{\F}_{i+1}$-measurable random variables where each $D_i$ is independent of $W_i^\P$. Then,
    the process $W^\Q_{0:i} := \{W^\Q_j\}_{j=0}^{i}$ 
defined by \eqref{eq:wqwqdt}
is Brownian with respect to $\wt{\Q}_{i+1}$, that
    is, $W_i^\Q \sim \mathcal{N}(0,I_n)$ is normally distributed, $\wt{\F}_{i+1}$-measurable,
    and the set of variables $\{W_i^\Q\}$ are mutually independent.
\hfill $\square$
\end{lemma}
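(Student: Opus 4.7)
The plan is to prove the lemma by induction on $i$, using the joint characteristic function to simultaneously establish the standard normality of each $W^\Q_j$ and the mutual independence of $\{W^\Q_j\}_{j=0}^i$ under $\wt{\Q}_{i+1}$. The $\wt{\F}_{i+1}$-measurability of $W^\Q_i$ is immediate from $W^\Q_i := W^\P_i - D_i$, since both terms are $\wt{\F}_{i+1}$-measurable by hypothesis.

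The key steps would proceed as follows. First, I would observe the multiplicative relation $\Theta_{i+1} = \Theta_i \cdot \exp(-\tfrac{1}{2}\|D_i\|^2 + D_i^\top W_i^\P)$ and verify that $\wt{\Q}_{i+1}$ restricted to $\wt{\F}_i$ agrees with $\wt{\Q}_i$, by checking that the conditional expectation under $\wt{\P}$ of the single-step factor (given $\wt{\F}_i \vee \sigma(D_i)$) equals one; this is the Gaussian moment generating function $\E[\exp(D_i^\top W_i^\P)] = \exp(\tfrac{1}{2}\|D_i\|^2)$, valid because $W_i^\P \sim \mathcal{N}(0,I_n)$ is independent of $D_i$. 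This consistency is what makes the inductive scheme well-posed.

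Next, for arbitrary $u_0,\ldots,u_i \in \R^n$, I would compute the joint characteristic function by changing measure via \eqref{eq:dtmeas} and isolating the $i$-th step:
\begin{align*}
\Psi_{i+1}(u_0,\ldots,u_i)
&:= \E_{\wt{\P}}\Big[\Theta_{i+1}\exp\Big(\iota\sum_{j=0}^{i} u_j^\top W^\Q_j\Big)\Big] \\
&= \E_{\wt{\P}}\Big[\Theta_i e^{\iota\sum_{j=0}^{i-1} u_j^\top W^\Q_j} \cdot e^{-\tfrac{1}{2}\|D_i\|^2 - \iota u_i^\top D_i} e^{(D_i + \iota u_i)^\top W_i^\P}\Big].
\end{align*}
Conditioning on the $\sigma$-algebra generated by everything except $W_i^\P$, and applying the complex-Gaussian identity $\E[\exp(z^\top W_i^\P)] = \exp(\tfrac{1}{2} z^\top z)$ with $z = D_i + \iota u_i$, all $D_i$-dependent terms cancel algebraically, leaving the clean factor $\exp(-\tfrac{1}{2}\|u_i\|^2)$. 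What remains equals $\E_{\wt{\Q}_i}[\exp(\iota\sum_{j=0}^{i-1} u_j^\top W^\Q_j)]$, which by the inductive hypothesis equals $\prod_{j=0}^{i-1}\exp(-\tfrac{1}{2}\|u_j\|^2)$. Thus $\Psi_{i+1}(u_0,\ldots,u_i) = \prod_{j=0}^{i}\exp(-\tfrac{1}{2}\|u_j\|^2)$, which is the characteristic function of $i{+}1$ independent $\mathcal{N}(0,I_n)$ vectors; uniqueness of characteristic functions closes the induction.

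The main obstacle is the measurability bookkeeping: because $D_i$ is only $\wt{\F}_{i+1}$-measurable (not $\wt{\F}_i$-measurable), it cannot be pulled out of a conditional expectation given $\wt{\F}_i$ as a constant. The escape is the hypothesized independence of $D_i$ and $W_i^\P$ under $\wt{\P}$: conditioning instead on $\wt{\F}_i \vee \sigma(D_i)$ preserves the $\mathcal{N}(0,I_n)$ law of $W_i^\P$, so the Gaussian integration proceeds as though $D_i$ were deterministic and the $D_i$ dependence cancels. Making this conditioning precise, and verifying that each $\Theta_j$ is indeed $\wt{\F}_j$-measurable so that the tower argument threads correctly through the induction, is the technical care the proof requires; boundedness of $D_i$ ensures the characteristic-function manipulations are justified and $\Theta_{i+1}$ has unit mean, so $\wt{\Q}_{i+1}$ is a genuine probability measure.
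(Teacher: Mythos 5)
Your proposal is correct, but it takes a genuinely different route from the paper. The paper proves the lemma by computing the pushforward density directly: it evaluates $\wt{\Q}_{i+1}\circ(W^\Q_{0:i})^{-1}$ on product sets $A_0\times\cdots\times A_i$, peels off the last factor $\varphi(D_i,W_i^\P)$ of $\Theta_{i+1}$ by conditioning on $(D_{0:i},W^\P_{0:i-1})$, uses the pointwise identity $\varphi(d,w)\,p_{\mathcal N}(w)=p_{\mathcal N}(w-d)$ together with the translation change of variables $w^\Q_i=w^\P_i-d_i$, and iterates; it then needs a semi-algebra/extension argument (relegated to a footnote) to pass from rectangles to the full product $\sigma$-algebra. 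You instead compute the joint characteristic function under the changed measure, and the same Gaussian completion of the square appears as the complex moment-generating identity $\E[\exp(z^\top W_i^\P)]=\exp(\tfrac12 z^\top z)$ with $z=D_i+\iota u_i$; the cancellation you describe is exactly right and yields the factor $\exp(-\tfrac12\|u_i\|^2)$. Your route buys two things: uniqueness of characteristic functions identifies the joint law on the whole Borel $\sigma$-algebra at once, so no extension theorem is needed, and your explicit consistency check $\wt{\Q}_{i+1}|_{\wt{\F}_i}=\wt{\Q}_i$ (unit conditional mean of the single-step factor) makes the induction cleaner than the paper's iterated ``continue this process'' step. The paper's version is more concrete in that it exhibits the density itself. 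One shared caveat: both arguments actually require $W_i^\P$ to be independent of the joint $\sigma$-algebra $\wt{\F}_i\vee\sigma(D_i)$, not merely pairwise independent of $D_i$ as the lemma literally states; you correctly identify this as the delicate point and condition on $\wt{\F}_i\vee\sigma(D_i)$, which is the same implicit strengthening the paper uses when it invokes ``the independence assumption on $W_i^\P$'' after conditioning on $(D_{0:i},W^\P_{0:i-1})$.
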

\begin{proof}
    See Appendix~\ref{sec:dtgirsanov}.
\end{proof}


Henceforth, we let $\wt{\P} := \wt{\P}_N$ for notational simplicity.
It is easy to see that the drifted forward difference \eqref{eq:driftfsdedt} and  the on-policy
    forward difference \eqref{eq:onpolicyfsdedt}
    are identical under the substitution \eqref{eq:wqwqdt}.
Thus, we conclude that the drifted process $\{X_i\}$ still satisfies 
    the on-policy Bellman equation \eqref{eq:opbellman} for the same on-policy value function $V^\mu$.

To derive the backward step, we perform a Taylor expansion
    centered at 
\begin{align}
    \ol{X}^\P_{i+1} &:= \E_{\wt{\P}}[X_{i+1} | X_i, K_i] =  X_i + K_i \text{,}
\end{align}
    instead of $\ol{X}^\Q_{i+1}$.
The expressions defining $\wt{Y}_{i+1}$, $\ol{Y}_{i+1}$, $\ol{Z}_{i+1}$, and $\ol{M}_{i+1}$
    \eqref{eq:errwtY},\eqref{eq:wtY},\eqref{eq:defY},\eqref{eq:defZ},\eqref{eq:defM}
    are all identical except for replacing $\ol{X}^\Q_{i+1},W^\Q_i$ with $\ol{X}^\P_{i+1},W^\P_i$.
Again, substituting $\wt{Y}_{i+1}$ in for $V^\mu_{i+1}(X_{i+1})$ in \eqref{eq:opbellman} and rearranging
    terms in light of \eqref{eq:backstep}, we arrive at an estimator for the backward step
\begin{align}
    \begin{aligned}
        \Delta \wh{Y}_{i}^{\textnormal{drift}} &:= 
        - L^\mu_i + \ol{Z}_{i+1}^\top W^\P_i - \ol{Z}_{i+1}^\top D_i \\
        &\quad + \frac{1}{2} \tr(\ol{M}_{i+1} (W^\P_i W^{\P \top}_i - I - D_i D_i^\top)) \text{.}
        \label{eq:driftdtbsde}
    \end{aligned}
\end{align}
Recognize that this is a generalization of \eqref{eq:dtbsde},
    by noting that when $K_i = F^\mu_i$ then $D_i = 0$ and the drifted forward 
    difference \eqref{eq:driftfsdedt} 
    and the backward step reduce to their on-policy form \eqref{eq:onpolicyfsdedt}, \eqref{eq:dtbsde}.

\begin{lemma} \label{thm:driftdeltaest}
    The choice \eqref{eq:driftdtbsde} yields the residual error 
\begin{align}
    \Delta Y_i - \Delta \wh{Y}_i &= 
    \delta^{\Delta \wh{Y}}_{i+1}
    - \E_{\wt{\Q}}[\delta^{\Delta \wh{Y}}_{i+1} |X_i, K_i] \text{.} \label{eq:deltadelta2}
\end{align}
\hfill $\square$
\end{lemma}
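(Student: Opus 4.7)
The plan is to mirror the structure of the proof of Theorem~\ref{thm:deltaest}, with the conditioning sigma-algebra enlarged to include $K_i$ and the Taylor expansion recentered at $\ol{X}^\P_{i+1}$ rather than $\ol{X}^\Q_{i+1}$. As a first step, I would establish the compact form
\begin{align}
    \Delta \wh{Y}_i^{\textnormal{drift}} = -L^\mu_i + \wt{Y}_{i+1} - \E_{\wt{\Q}}[\wt{Y}_{i+1} \,|\, X_i, K_i],
\end{align}
where $\wt{Y}_{i+1} := \ol{Y}_{i+1} + \ol{Z}_{i+1}^\top W^\P_i + \tfrac{1}{2} W^{\P \top}_i \ol{M}_{i+1} W^\P_i$ is the drifted analogue of \eqref{eq:wtY}. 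To compute the conditional expectation, I would substitute $W^\P_i = W^\Q_i + D_i$ from \eqref{eq:wqwqdt} and invoke Lemma~\ref{lemma:dtgirsanov}, which guarantees that under $\wt{\Q}$, $W^\Q_i$ is standard normal and independent of the $(X_i,K_i)$-measurable quantity $D_i$. This yields $\E_{\wt{\Q}}[W^\P_i | X_i, K_i] = D_i$ and $\E_{\wt{\Q}}[W^\P_i W^{\P \top}_i | X_i, K_i] = I + D_i D_i^\top$. Combined with the trace identity $W^{\P \top}_i \ol{M}_{i+1} W^\P_i = \tr(\ol{M}_{i+1} W^\P_i W^{\P \top}_i)$, direct algebraic simplification reproduces \eqref{eq:driftdtbsde}.

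Next, combining the definition of $\delta^{\wt V}_{i+1}$ with the (recentered) Taylor expansion gives $V^\mu_{i+1}(X_{i+1}) = \wt{Y}_{i+1} + \delta^{\Delta \wh{Y}}_{i+1}$. Applying the on-policy Bellman equation \eqref{eq:opbellman}, which remains valid under $\wt{\Q}$ because the drifted FSDE \eqref{eq:driftfsdedt} reduces to the on-policy FSDE \eqref{eq:onpolicyfsdedt} under $\wt{\Q}$ via \eqref{eq:wqwqdt}, and substituting into the compact form above, I would obtain
\begin{align}
    \Delta Y_i - \Delta \wh{Y}_i^{\textnormal{drift}} = \delta^{\Delta \wh{Y}}_{i+1} + L^\mu_i - V^\mu_i(X_i) + \E_{\wt{\Q}}[\wt{Y}_{i+1} \,|\, X_i, K_i].
\end{align}
Using $V^\mu_i(X_i) = L^\mu_i + \E_{\wt{\Q}}[\wt{Y}_{i+1} + \delta^{\Delta \wh{Y}}_{i+1} \,|\, X_i, K_i]$ then collapses this expression to the desired formula \eqref{eq:deltadelta2}.

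The main obstacle is the conditional independence step: one must justify that $\E_{\wt{\Q}}[V^\mu_{i+1}(X_{i+1}) | X_i] = \E_{\wt{\Q}}[V^\mu_{i+1}(X_{i+1}) | X_i, K_i]$, so that conditioning additionally on $K_i$ does not perturb the Bellman expectation. Although $K_i$ is $\wt{\F}_{i+1}$-measurable through the auxiliary variable $\xi_i$, what is required is that under $\wt{\Q}$ the increment $W^\Q_i$ is independent of $\xi_i$ given $X_i$. This is essentially the full content of Lemma~\ref{lemma:dtgirsanov}; once granted, the remainder of the argument is a direct mechanical adaptation of Theorem~\ref{thm:deltaest} and requires no additional ingredients.
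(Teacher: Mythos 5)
Your proposal is correct and follows essentially the same route as the paper's proof: substitute $W^\P_i = W^\Q_i + D_i$, compute $\E_{\wt{\Q}}[\wt{Y}_{i+1}\,|\,X_i,K_i]$ using the $(X_i,K_i)$-measurability of $D_i,\ol{Y}_{i+1},\ol{Z}_{i+1},\ol{M}_{i+1}$ to recover the compact form $\Delta\wh{Y}_i = -L^\mu_i + \wt{Y}_{i+1} - \E_{\wt{\Q}}[\wt{Y}_{i+1}\,|\,X_i,K_i]$, invoke $Y_{i+1}=\wt{Y}_{i+1}+\delta^{\Delta\wh{Y}}_{i+1}$ and the Bellman equation, and close the argument with the conditional independence $\E_{\wt{\Q}}[Y_{i+1}\,|\,X_i,K_i]=\E_{\wt{\Q}}[Y_{i+1}\,|\,X_i]$, which is exactly the step the paper also uses (you merely apply it before rather than after the Bellman substitution). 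The one point you flag as an obstacle is handled at the same level of justification in the paper itself, so no gap remains.
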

\begin{proof}
Substituting \eqref{eq:wqwqdt} into
\begin{align}
    \wt{Y}_{i+1} &:= \ol{Y}_{i+1} 
        + \ol{Z}_i^\top W^\P_i 
        + \frac{1}{2} W^{\P \top}_i \ol{M}_i W^\P_i
        \text{,} \label{eq:new_wty}
\end{align}
yields
\begin{align}
    \wt{Y}_{i+1} 
    &= \ol{Y}_{i+1} 
        + \ol{Z}_{i+1}^\top (W^\Q_i + D_i) \nonumber \\
        &\quad + \frac{1}{2} (W^\Q_i + D_i)^\top \ol{M}_{i+1} (W^\Q_i + D_i) \nonumber \\
    &= \ol{Y}_{i+1} 
        + \ol{Z}_{i+1}^\top W^\Q_i + \ol{Z}_{i+1}^\top D_i 
        + D_i^\top \ol{M}_{i+1} W^\Q_i \nonumber \\
        &\quad + \frac{1}{2} \tr \big( \ol{M}_{i+1} (W^\Q_i W^{\Q \top}_i + D_i D_i^\top) \big) 
    \text{.}  \nonumber
\end{align}
Note that $D_i$, $\ol{Y}_{i+1}$, $\ol{Z}_{i+1}$, and $\ol{M}_{i+1}$, are $(X_i,K_i)$-measurable.
Taking the conditional expectation in the on-policy measure $\wt{\Q}$ yields
\begin{align}
    \E_{\wt{\Q}}[\wt{Y}_{i+1}|X_i, K_i] 
    &= \ol{Y}_{i+1}
        + \ol{Z}_{i+1}^\top D_i \nonumber \\
        &\quad + \frac{1}{2} \tr \big( \ol{M}_{i+1} (I + D_i D_i^\top) \big) 
        \text{.} \label{eq:condexpdrifty}
\end{align}
Comparing \eqref{eq:driftdtbsde}, \eqref{eq:new_wty}, and \eqref{eq:condexpdrifty}, 
    it can be easily shown that
\begin{align}
    \Delta \wh{Y}_i = - L^\mu_i + \wt{Y}_{i+1} - \E_{\wt{\Q}}[\wt{Y}_{i+1}|X_i, K_i] \text{.}
\end{align}
The Taylor expansion \eqref{eq:errwtY} immediately yields
    $Y_{i+1} = \wt{Y}_{i+1} + \delta^{\Delta \wh{Y}}_{i+1}$.
Combining these two expressions yields
\begin{align}
    \Delta \wh{Y}_i &= - L^\mu_i + Y_{i+1} - \delta^{\Delta \wh{Y}}_{i+1} 
    - \E_{\wt{\Q}}[Y_{i+1} - \delta^{\Delta \wh{Y}}_{i+1}|X_i, K_i] \text{.}
\end{align}
Substituting in the Bellman equation $Y_i = L^\mu_i + \E_{\wt{\Q}}[Y_{i+1} | X_i]$ 
    \eqref{eq:opbellman}  and rearranging,
    we have
\begin{align}
    &\Delta Y_i - \Delta \wh{Y}_i \nonumber \\
    &\quad = \delta^{\Delta \wh{Y}}_{i+1} 
    + \E_{\wt{\Q}}[Y_{i+1} - \delta^{\Delta \wh{Y}}_{i+1}|X_i, K_i] 
     - \E_{\wt{\Q}}[Y_{i+1} | X_i] 
    \text{.}
\end{align}
Under the measure $\wt{\Q}$, $Y_{i+1}$ is independent of $K_i$ given $X_i$, 
    so we have
\begin{align*}
    \E_{\wt{\Q}}[Y_{i+1}|X_i, K_i] 
    &= \E_{\wt{\Q}}[Y_{i+1} | X_i] 
    \text{,}
\end{align*}
    and by subsituting into the previous equation we arrive at \eqref{eq:deltadelta2}.
\end{proof}
The distribution of the residual error $\Delta Y_i - \Delta \wh{Y}_i$ 
    depends on the measure we use to interpret it.
For numerical applications we sample from the measure $\wt{\P}$ instead of $\wt{\Q}$, and thus
    this estimator is no longer unbiased with respect to the sampled distribution.
The conditional expectation with respect to $\wt{\P}$ of the right hand side of \eqref{eq:deltadelta2} is
\begin{align}
    \eps^{\P | \Q}_{i+1}
    &:= \E_{\wt{\P}}[\delta^{\Delta \wh{Y}}_{i+1}
    - \E_{\wt{\Q}}[\delta^{\Delta \wh{Y}}_{i+1} |X_i, K_i] |X_i, K_i] \nonumber \\
    &= \E_{\wt{\P}}[\delta^{\Delta \wh{Y}}_{i+1} | X_i, K_i]
    - \E_{\wt{\Q}}[\delta^{\Delta \wh{Y}}_{i+1} |X_i, K_i] 
    \text{.} 
\end{align}

The two estimators for $\wh{Y}_{i+1}$, \eqref{eq:reinit} \eqref{eq:noiseest},
    presented in Section~\ref{sec:options},
    can be used without modification, given that in the noiseless condition, 
    $\wt{Y}_{i+1}$ is taken to be \eqref{eq:new_wty}.
The drifted \textit{noiseless} estimator now resolves to
\begin{align}
    \wh{Y}^{\textnormal{noiseless}}_i 
    &= L^\mu_i + \ol{Y}_{i+1} + \ol{Z}_{i+1}^\top D_i \nonumber \\
    &\quad + \frac{1}{2} \tr(\ol{M}_{i+1} (I + D_i D_i^\top)) \text{.} 
    \label{eq:driftnoiseless}
\end{align}
\begin{theorem} \label{thm:newests}
    For the estimator $\wh{Y}_i := \wh{Y}_{i+1} - \Delta \wh{Y}_i$ where
    $\Delta \wh{Y}_i$ is defined in \eqref{eq:driftdtbsde} and $\wh{Y}_{i+1}$
    is defined in \eqref{eq:reinit} or \eqref{eq:noiseest} the bias is
\begin{align}
    \E_{\wt{\P}}[Y_i - \wh{Y}^\textnormal{re-est}_i |X_i, K_i]
    &= \E_{\wt{\Q}}[\delta^{\Delta \wh{Y}}_{i+1} |X_i, K_i]  \nonumber \\
    &\quad - \E_{\wt{\P}}[\delta^{\textnormal{h.o.t.}}_{i+1} |X_i, K_i]
    \text{,} \\
    \E_{\wt{\P}}[Y_i - \wh{Y}^\textnormal{noiseless}_i |X_i, K_i] 
    &= \E_{\wt{\Q}}[\delta^{\Delta \wh{Y}}_{i+1} |X_i, K_i]
    \text{.} 
\end{align}
    The variances of the estimators are
\begin{align}
    \var_{\wt{\P}}[\wh{Y}^\textnormal{re-est}_i |X_i, K_i]
    &= \var_{\wt{\P}}[ \delta^{\textnormal{h.o.t.}}_{i+1} |X_i, K_i] \\
    \var_{\wt{\P}}[\wh{Y}^\textnormal{noiseless}_i |X_i, K_i]
    &= 0
    \text{.} 
\end{align}
\hfill $\square$
\end{theorem}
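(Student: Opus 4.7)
The plan is to exploit the intermediate identity established inside the proof of Lemma~\ref{thm:driftdeltaest}, namely
$\Delta \wh{Y}_i = -L^\mu_i + \wt{Y}_{i+1} - \E_{\wt{\Q}}[\wt{Y}_{i+1}|X_i, K_i]$,
together with the two Taylor-expansion identities $V^\mu_{i+1}(X_{i+1}) = \wt{Y}_{i+1} + \delta^{\Delta \wh{Y}}_{i+1}$ and $\wt{V}^\mu_{i+1}(X_{i+1}) = \wt{Y}_{i+1} + \delta^{\textnormal{h.o.t.}}_{i+1}$ obtained from \eqref{eq:errwtY} and \eqref{eq:deltawhydef}. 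Substituting these into the combined estimator $\wh{Y}_i := \wh{Y}_{i+1} - \Delta \wh{Y}_i$ will cause the stochastic $\wt{Y}_{i+1}$ pieces to cancel, leaving only $(X_i, K_i)$-measurable quantities plus at most one leftover residual term. Everything else is routine rearrangement.

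For the re-estimate case, the substitution should produce the closed form
$\wh{Y}_i^{\textnormal{re-est}} = L^\mu_i + \delta^{\textnormal{h.o.t.}}_{i+1} + \E_{\wt{\Q}}[\wt{Y}_{i+1}|X_i, K_i]$. Comparing this against the on-policy Bellman equation \eqref{eq:opbellman} for $Y_i$, and using that $Y_{i+1}$ is $\wt{\Q}$-independent of $K_i$ given $X_i$ so that $\E_{\wt{\Q}}[Y_{i+1}|X_i, K_i] = \E_{\wt{\Q}}[Y_{i+1}|X_i]$, yields $Y_i - \wh{Y}_i^{\textnormal{re-est}} = \E_{\wt{\Q}}[\delta^{\Delta \wh{Y}}_{i+1}|X_i, K_i] - \delta^{\textnormal{h.o.t.}}_{i+1}$. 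A single application of $\E_{\wt{\P}}[\cdot|X_i, K_i]$ gives the claimed bias. The variance identity is then immediate: the only non-$(X_i,K_i)$-measurable term in $\wh{Y}_i^{\textnormal{re-est}}$ is $-\delta^{\textnormal{h.o.t.}}_{i+1}$, and the sign drops out of the variance.

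For the noiseless case, substituting $\wh{Y}_{i+1}^{\textnormal{noiseless}} = \wt{Y}_{i+1}$ produces the complete cancellation $\wh{Y}_i^{\textnormal{noiseless}} = L^\mu_i + \E_{\wt{\Q}}[\wt{Y}_{i+1}|X_i, K_i]$, which is already $(X_i,K_i)$-measurable. Its conditional variance is therefore zero by inspection, and its bias equals $\E_{\wt{\Q}}[\delta^{\Delta \wh{Y}}_{i+1}|X_i, K_i]$ without any need to evaluate an outer $\wt{\P}$-expectation, since this quantity is already deterministic given the conditioning.

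The only real care point—rather than a genuine obstacle—is bookkeeping on which measure each conditional expectation lives in. The Bellman equation is native to $\wt{\Q}$, but the sampled data has law $\wt{\P}$; this is precisely why the re-estimate bias carries one $\wt{\Q}$-expectation and one $\wt{\P}$-expectation, and why the independence statement obtained from Lemma~\ref{lemma:dtgirsanov} is needed to legitimately replace $\E_{\wt{\Q}}[\cdot|X_i]$ with $\E_{\wt{\Q}}[\cdot|X_i, K_i]$ when invoking \eqref{eq:opbellman}.
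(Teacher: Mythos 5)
Your proposal is correct and follows essentially the same route as the paper: both hinge on the identity $\Delta \wh{Y}_i = -L^\mu_i + \wt{Y}_{i+1} - \E_{\wt{\Q}}[\wt{Y}_{i+1}|X_i,K_i]$ from Lemma~\ref{thm:driftdeltaest}, the Taylor decompositions $V^\mu_{i+1}(X_{i+1}) = \wt{Y}_{i+1} + \delta^{\Delta\wh{Y}}_{i+1}$ and $\wt{V}^\mu_{i+1}(X_{i+1}) = \wt{Y}_{i+1} + \delta^{\textnormal{h.o.t.}}_{i+1}$, the Bellman equation, and the $\wt{\Q}$-conditional independence of $Y_{i+1}$ from $K_i$ given $X_i$. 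The only organizational difference is that you unroll the estimators into explicit closed forms and subtract from the Bellman equation, whereas the paper works with the residual $Y_{i+1}-\wh{Y}_{i+1}$ via the final statement \eqref{eq:deltadelta2} of the lemma; the two computations are equivalent.
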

\begin{proof}
    See Appendix \ref{sec:ProofThm32n33}.
\end{proof}

Since $\wt{\Q}$ is not available during computation, 
    we characterize $\E_{\wt{\Q}}[\delta^{\Delta \wh{Y}}_{i+1} |X_i]$
    exclusively in the measure $\wt{\P}$ using the next result.
    
\begin{proposition} \label{thm:diffmeaserr}
    The bias term appearing in Theorem~\ref{thm:newests} is bounded as
\begin{align}
    &|\E_{\wt{\Q}}[\delta^{\Delta \wh{Y}}_{i+1} |X_i, K_i]| \nonumber \\
    &\quad \leq \exp(\frac{1}{2}\| D_i \|^2) \, \,
    \E_{\wt{\P}}[(\delta^{\Delta \wh{Y}}_{i+1})^2 |X_i, K_i]^{1/2} 
    \text{.} \label{eq:epsbound}
\end{align}
\hfill $\square$
\end{proposition}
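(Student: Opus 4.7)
The plan is to prove the bound by first converting the $\wt{\Q}$-conditional expectation into a $\wt{\P}$-conditional expectation using a local (one-step) Radon-Nikodym derivative, then applying the Cauchy-Schwarz inequality, and finally evaluating the resulting Gaussian moment generating function. The key observation that makes this tractable is that $D_i$ is $(X_i, K_i)$-measurable, so conditioning on $(X_i, K_i)$ freezes $D_i$ and reduces the measure change to the single-step effect of Girsanov on $W_i^\P$.

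First, I would identify the conditional Radon-Nikodym derivative relating $\wt{\Q}$ and $\wt{\P}$ at the one-step level. Since $W_i^\Q = W_i^\P - D_i$ by \eqref{eq:wqwqdt}, and under $\wt{\P}$ we have $W_i^\P \sim \mathcal{N}(0, I_n)$ independent of $(X_i, K_i)$, the conditional density of $W_i^\P$ under $\wt{\Q}$ given $(X_i, K_i)$ is $\mathcal{N}(D_i, I_n)$. Writing out both Gaussian densities and taking their ratio yields the conditional likelihood ratio $\Lambda_i := \exp(D_i^\top W_i^\P - \tfrac{1}{2}\|D_i\|^2)$, which is exactly $\Theta_{i+1}/\Theta_i$ restricted to the $i$-th increment. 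I would justify this step either by a direct change of variables in the integral or by appealing to Lemma~\ref{lemma:dtgirsanov} applied one step at a time, noting that $\delta^{\Delta \wh{Y}}_{i+1}$ is determined by $(X_i, K_i, W_i^\P)$.

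With this in hand, I would write
\begin{align*}
    \E_{\wt{\Q}}[\delta^{\Delta \wh{Y}}_{i+1} | X_i, K_i]
    = \E_{\wt{\P}}\bigl[\Lambda_i \, \delta^{\Delta \wh{Y}}_{i+1} \bigm| X_i, K_i\bigr],
\end{align*}
apply the conditional Cauchy-Schwarz inequality to split the product, and take absolute values. This gives an upper bound of $\E_{\wt{\P}}[\Lambda_i^2 | X_i, K_i]^{1/2} \cdot \E_{\wt{\P}}[(\delta^{\Delta \wh{Y}}_{i+1})^2 | X_i, K_i]^{1/2}$. The second factor is already the quantity appearing on the right-hand side of \eqref{eq:epsbound}, so the entire task reduces to bounding the first factor.

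Finally, I would compute $\E_{\wt{\P}}[\Lambda_i^2 | X_i, K_i]$ explicitly. Since $\Lambda_i^2 = \exp(2 D_i^\top W_i^\P - \|D_i\|^2)$ and $W_i^\P$ is standard normal independent of $(X_i,K_i)$ under $\wt{\P}$, the standard Gaussian MGF identity $\E_{\wt{\P}}[\exp(2 D_i^\top W_i^\P) | X_i, K_i] = \exp(2 \|D_i\|^2)$ yields $\E_{\wt{\P}}[\Lambda_i^2 | X_i, K_i] = \exp(\|D_i\|^2)$, so its square root is $\exp(\tfrac{1}{2}\|D_i\|^2)$, matching the constant in the claim. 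The only subtle point — and the one I would be most careful about — is justifying the conditional change-of-measure step; the cleanest route is probably to verify directly via the abstract Bayes formula (using that the unconditional Radon-Nikodym derivative is $\Theta_N$ and that $\Theta_i$ is $(X_0,\ldots,X_i,K_0,\ldots,K_{i-1})$-measurable), which reduces the global change of measure to the local factor $\Lambda_i$.
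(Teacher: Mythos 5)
Your proposal is correct and follows essentially the same route as the paper's proof: reduce the $\wt{\Q}$-conditional expectation to a $\wt{\P}$-conditional expectation against the one-step likelihood ratio $\varphi(D_i,W^\P_i)=\exp(D_i^\top W^\P_i - \tfrac{1}{2}\|D_i\|^2)$ (the paper does this via the abstract Bayes formula with $\Theta_N$, the martingale property of $\Theta$, and the tower property --- exactly the fallback justification you identify), then apply conditional Cauchy--Schwarz and evaluate $\E_{\wt{\P}}[\varphi(D_i,W^\P_i)^2\,|\,X_i,K_i]=\exp(\|D_i\|^2)$ via the Gaussian moment generating function. No gaps.
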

\begin{proof}
    See Appendix~\ref{sec:diffmeaserr}.
\end{proof}
Although the error bound in Proposition~\ref{thm:diffmeaserr} suggests that the bias grows rapidly with the magnitude $\| D_i \|$,
when this magnitude is small ($\| D_i \| \leq 1$) the first term 
    in the product on the right hand side of \eqref{eq:epsbound}
    is bounded by $\sqrt{e} \approx 1.65$. 
To illustrate the effect of $\| D_i \|$ on the error bound, consider a one-dimensional problem
    where we select $K_i = F^\mu_i + a$ for some random variable $a$ with bounded magnitude
    $| a | \leq \Sigma_i$ a.s..
It subsequently follows that \mbox{$\exp(\|D_i\|^2) \leq \sqrt{e}$} a.s..
This suggests that, in general, the magnitude of the difference $F^\mu_i - K_i$ should be proportional
    to the diffusion $\Sigma_i$.
Further, it is still the case that if the value function approximation is quadratic then the 
    higher order terms $\delta^{\textnormal{h.o.t.}}_{i+1}$ drop out.

These analytical results justify the assumption that for appropriately chosen $K_i$,
    the choice of \eqref{eq:driftdtbsde} represents a low bias, low variance approximator
    for the backward difference step.
It also provides guidance for how to select $K_i$.

\section{Policy Improvement} \label{sec:policyimprove}

In this section we discuss how policies can be improved based on the value function parameters
    obtained from the backward passes.
First, we discuss a na\"ive continuous-time approximation approach
    arising from the Hamiltonian used in HJB equations.
Continuous-time analysis of the Hamiltonian suggests that the optimal control policy $\pi^*$ satisfies
    the inclusion~\eqref{eq:optpolicy}, so a na\"ive approach to improving the policy
    would be to use the Euler-Maruyama approximation of the dynamics and costs along
    with the gradient of the recent approximation of the value function to
    evaluate this policy optimization.
This Hamiltonian-based approach
\begin{align}
    \wt{\pi}^*_i(x) 
    &\in \argmin_{u \in U} \{ \ell(t_i,x,u) + f(t_i,x,u)^\top \partial_x \wt{V}_{i}(x) \} \nonumber \\
    &\equiv \argmin_{u \in U} \{ L_i(x,u) + F_i(x,u)^\top \partial_x \wt{V}_{i}(x) \} 
    \text{,} \label{eq:hamil_policy}
\end{align}
    is used for estimating the optimal policy in 
    \cite{exarchos2018stochastic,Exarchos2018}.

 According to the discussion in the previous section, we propose an alternative Taylor-based approach to \eqref{eq:hamil_policy} as follows. 
 We begin with
    a discrete approximation of the continuous problem and form the Q-value function at time $i$, given the value function $V^\mu_{i+1}$, 
\begin{align}
    Q^\mu_i(x,u) := L_i(x,u) + \E_{\wt{\Q}_i}[V^\mu_{i+1}(X_{i+1})|X_i=x], \label{eq:qvalfnexact}
\end{align}
    where $\wt{\Q}_i$ is the measure corresponding to the forward difference step
\begin{align}
    X_{i+1} - x &= F_i(x,u) + \Sigma_i W^\Q_i \text{.} 
\end{align}
The optimal Bellman equation indicates that the optimal policy satisfies
    $\pi^*_i(x) \in \argmin_{u \in U} Q^{\pi^*}_i(x,u)$ and the optimal
    value function satisfies $V^{\pi^*}_i(x) = \min_{u \in U} Q^{\pi^*}_i(x,u)$.
Notice that when $V^\pi_{i+1} \leq V^\mu_{i+1}$ and $Q^\pi_i(x,\pi_i(x)) \leq Q^\mu_i(x,\mu_i(x))$ then
    $V^\pi_i \leq V^\mu_i$, so $\pi$ will be an improved policy over $\mu$.
Letting
\begin{align}
    \ol{X}^{x,u}_{i+1} &:= \E_{\wt{\Q}_i}[X_{i+1} | X_i = x] =  x + F_i(x,u) \text{,}
\end{align}
    and performing the same Taylor expansion approach as in \eqref{eq:errwtY}, \eqref{eq:wtY},
    we arrive at the approximation $\wt{Q}^\mu_i \approx Q^\mu_i$ defined as
\begin{align}
    \wt{Q}^\mu_i(x,u) 
    &:= L_i(x,u) + \ol{Y}_{i+1}^{x,u} + \frac{1}{2} \tr(\ol{M}_{i+1}^{x,u}) \text{,} 
    \label{eq:qvalfn}
\end{align}
    where
\begin{align}
    \ol{Y}_{i+1}^{x,u} &:= \wt{V}^\mu_{i+1}(\ol{X}^{x,u}_{i+1}) \text{,} \nonumber \\
    \ol{M}_{i+1}^{x,u} 
    &:= \Sigma_i^\top \partial_{xx} \wt{V}^\mu_{i+1}(\ol{X}^{x,u}_{i+1}) \Sigma_i \text{.} \nonumber
\end{align}

\begin{proposition}
    The error when using \eqref{eq:qvalfn} to approximate the Q-value function is
\begin{align}
    Q^\mu_i(x,u) - \wt{Q}^\mu_i(x,u) &= \E_{\wt{\Q}_i}[\delta^{\Delta \wh{Y}}_{i+1}|X_i=x]
    \text{.} \label{eq:qvalerr}
\end{align}
\hfill $\square$
\end{proposition}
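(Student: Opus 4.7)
The plan is to directly manipulate the exact definition of $Q^\mu_i$ and match it against $\wt{Q}^\mu_i$, with the Taylor expansion residual sitting in the difference. All the work is already done by Theorem~\ref{thm:deltaest}-era machinery; I just need to recycle it at a fixed control $u$ instead of under a policy $\mu$.

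First, I would substitute the Taylor expansion identity into the exact Q-value function. Since $\wt{V}^\mu_{i+1}(X_{i+1}) = \wt{Y}_{i+1} + \delta^{\textnormal{h.o.t.}}_{i+1}$ by \eqref{eq:errwtY} (with $\ol{X}^\Q_{i+1}$ replaced by $\ol{X}^{x,u}_{i+1}$ and the coefficients $\ol{Y}_{i+1}, \ol{Z}_{i+1}, \ol{M}_{i+1}$ replaced by $\ol{Y}_{i+1}^{x,u}, \ol{Z}_{i+1}^{x,u}, \ol{M}_{i+1}^{x,u}$), and since $\delta^{\wt{V}}_{i+1} := V^\mu_{i+1}(X_{i+1}) - \wt{V}^\mu_{i+1}(X_{i+1})$, I can write
\begin{align*}
V^\mu_{i+1}(X_{i+1}) = \wt{Y}_{i+1} + \delta^{\textnormal{h.o.t.}}_{i+1} + \delta^{\wt{V}}_{i+1} = \wt{Y}_{i+1} + \delta^{\Delta \wh{Y}}_{i+1},
\end{align*}
by the definition \eqref{eq:deltawhydef} of $\delta^{\Delta \wh{Y}}_{i+1}$.

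Second, I would take the conditional expectation $\E_{\wt{\Q}_i}[\,\cdot\,|X_i = x]$ of both sides and evaluate the expectation of $\wt{Y}_{i+1}$. Since under $\wt{\Q}_i$ we have $W^\Q_i \sim \mathcal{N}(0, I_n)$ independent of $X_i$, and since $\ol{Y}_{i+1}^{x,u}, \ol{Z}_{i+1}^{x,u}, \ol{M}_{i+1}^{x,u}$ are $(x,u)$-measurable (and hence constants under the conditioning), the linear term vanishes and the quadratic term reduces using $\E[W^{\Q\top}_i \ol{M}_{i+1}^{x,u} W^\Q_i] = \tr(\ol{M}_{i+1}^{x,u})$. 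This yields
\begin{align*}
\E_{\wt{\Q}_i}[\wt{Y}_{i+1}|X_i=x] = \ol{Y}_{i+1}^{x,u} + \tfrac{1}{2}\tr(\ol{M}_{i+1}^{x,u}).
\end{align*}

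Finally, adding $L_i(x,u)$ to both sides and comparing with the definition \eqref{eq:qvalfn} of $\wt{Q}^\mu_i(x,u)$ gives
\begin{align*}
Q^\mu_i(x,u) = \wt{Q}^\mu_i(x,u) + \E_{\wt{\Q}_i}[\delta^{\Delta \wh{Y}}_{i+1}|X_i=x],
\end{align*}
which is \eqref{eq:qvalerr}. There is no real obstacle here; the only thing to be careful about is keeping straight that the Taylor expansion is now centered at $\ol{X}^{x,u}_{i+1}$ (deterministic given $x,u$) rather than at $\ol{X}^\Q_{i+1}$, so the coefficients and the conditioning use the fixed pair $(x,u)$ rather than the policy evaluation at $X_i$.
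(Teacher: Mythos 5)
Your proposal is correct and follows essentially the same route as the paper's proof: both rest on the identity $Y_{i+1} = \wt{Y}_{i+1} + \delta^{\Delta \wh{Y}}_{i+1}$ from \eqref{eq:errwtY}, substitute it into the exact Q-value \eqref{eq:qvalfnexact}, and compare with \eqref{eq:qvalfn}. You merely spell out explicitly the conditional-expectation computation $\E_{\wt{\Q}_i}[\wt{Y}_{i+1}|X_i=x] = \ol{Y}_{i+1}^{x,u} + \tfrac{1}{2}\tr(\ol{M}_{i+1}^{x,u})$ that the paper leaves implicit.
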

\begin{proof}
    Due to \eqref{eq:errwtY} we have $Y_{i+1} = \wt{Y}_{i+1} + \delta^{\Delta \wh{Y}}_{i+1}$.
    We arrive at \eqref{eq:qvalerr} by subtracting \eqref{eq:qvalfn} from \eqref{eq:qvalfnexact}, 
    and then substituting in $Y_{i+1}$ for $V^\mu_{i+1}(X_{i+1})$.
\end{proof}

In general, we seek a policy that minimizes this Q-value function,
\begin{align}
    \mu^*(x;\wt{V}^\mu_{i+1}) := \min_{u \in U} \wt{Q}^\mu_i(x,u) \label{eq:optimprove} \text{.}
\end{align}
When the function $L_i$ is quadratic in terms of $u$ and/or contains an $L_1$ regularization term
    like $\sum_{j=1}^n |u^j|$, $U$~is~an interval set, $F_i$ is affine in the control, and 
    $\wt{V}^\mu_{i+1}$ is quadratic, then the optimization \eqref{eq:optimprove}
    has an analytic solution.
Also, similarly to the previous section, when $\wt{V}^\mu_{i+1}$ is quadratic, as is the
    case in LQR problems, the Taylor expansion of the Q-value function is exact.
Thus, this optimization will yield the exact optimal control solution for the LQR problem.

\section{Numerical Results} \label{sec:expresults}

Next, we numerically evaluate and compare the proposed Taylor estimators to the
    na\"ive Euler-Maruyama estimators on two problems, a nonlinear 1-dimensional problem
    and an LQR 4-dimensional problem.
The estimators discussed in this work are summarized in Table~\ref{tbl:estimators}.

\begin{table}[h]
    \centering
    \caption{Expressions for the proposed noiseless and re-estimate estimators, as well as the
             competing Euler-Maruyama estimators \eqref{eq:eulermarunoisy}, and \eqref{eq:eulermarunoiseless}
             (used in \cite{exarchos2018stochastic}).}
    \begin{tabular}{|c c|}
        \hline
        Estimator & $\wh{Y}_{i} = $ \\
        \hline \hline
        Taylor &
        $L^\mu_i + \ol{Y}_{i+1} + \ol{Z}_{i+1}^\top D_i$ \\ 
        Noiseless &
        $+ \frac{1}{2} \tr(\ol{M}_{i+1} (I + D_i D_i^\top))$  \\
        \hline
        Taylor &
        $\wt{V}^\mu_{i+1}(X_{i+1}) + L^\mu_i - \ol{Z}_{i+1}^\top W^\P_i + \ol{Z}_{i+1}^\top D_i$ \\ 
        Re-estimate &
        $+ \frac{1}{2} \tr(\ol{M}_{i+1} (I + D_i D_i^\top - W^\P_i W^{\P \top}_i))$ \\
        \hline
        Euler-Maru.&
        $\wt{V}^\mu_{i+1}(X_{i+1}) + L^\mu_i + \wt{Z}_{i+1}^\top D_i$ \\ 
        Noiseless \cite{exarchos2018stochastic} & \\
        \hline
        Euler-Maru. &
        $\wt{V}^\mu_{i+1}(X_{i+1}) + L^\mu_i - \wt{Z}_{i+1}^\top W^\P_i + \wt{Z}_{i+1}^\top D_i$ \\
        Noisy & \\
        \hline
    \end{tabular}
    \label{tbl:estimators}
\end{table}

\subsection{Nonlinear 1D Problem}

Consider the scalar optimal control problem with the dynamics and cost
\begin{align*}
    &\dX_s = \big(0.1 (X_s - 3)^2 + 0.2 u_s\big) \ds + 0.8\, \dW_s \text{,} \hspace{20pt} x_0 = 7 \text{,}
    \\
    &S_t(u_{[t,T]}) = \int^T_t \big(12 \, | X_s - 6 | + 0.4 \, u_s^2 \big) \, \ds + 25 \, X_T^2 \text{,} 
\end{align*}
over a time interval of length $T = 10$, with $N = 200$ discrete timesteps. 
We compute a ground-truth optimal value function $V^*_i$ by directly evaluating the optimal
    Bellman equation using a finely-gridded state space, control space, and noise space,
    and set the optimal policy as the target $\mu \equiv \pi^*$.
The optimal value function is visualized in Fig.~\ref{fig:nonlin_trajs} (the yellow surface), along
    with two forward-backward trajectory distributions $\{(X_i,Y_i)\}$ considered for evaluation:
    (a) the optimal 
    \mbox{$K_i^\text{optimal} = F^{\pi^*}_i$} (the cyan trajectories 
    in Fig.~\ref{fig:nonlin_trajs}), and
    (b) the suboptimal \mbox{$K_i^\text{subopt} = -0.2 X_i$} 
    (the orange trajectories).
We ran a series of simulations to investigate how each estimator performs under different
    algorithmic conditions, visualized in Fig.~\ref{fig:heatmaps}.
Each trial performs one forward pass, and then uses Chebyshev polynomials to 
    locally approximate the optimal value function in a single backward pass.
    For the purposes of evaluation we use the relative absolute error (RAE) metric \cite[Chapter~5]{witten2011}
\begin{align}
    &\frac{\int_{\mathcal{C}_i}  
        |\wt{V}_i(x) - V^*_i(x)| \, \mathrm{d} x}
    {\int_{\mathcal{C}_i}  
        |\int_{\mathcal{C}_i} V^*_i(y) \, \mathrm{d} y - V^*_i(x)| \, \mathrm{d} x} \nonumber \\
    &\quad \approx  
    \frac{\sum_{x \in \wt{\mathcal{C}}_i} |\wt{V}_i(x) - V^*_i(x)| }
    {\sum_{x \in \wt{\mathcal{C}}_i} |\sum_{y \in \wt{\mathcal{C}}_i} 
    \frac{1}{|\wt{\mathcal{C}}_i|} V^*_i(y) - V^*_i(x)|}
    \text{,} \label{eq:mad_def}
\end{align}
    where 
\begin{align}
    \mathcal{C}_i &:= [\underline{c}_i, \ol{c}_i] 
    := [ \ol{x}_i - \max\{3 \sigma_i , 1\},  \ol{x}_i + \max\{3 \sigma_i , 1\}] \nonumber \\
    &\approx \{ \underline{c}_i, \underline{c}_i + \Delta x, \ldots, 
        \ol{c}_i \} =: \wt{\mathcal{C}}_i
    \text{,} \label{eq:conf_reg} 
\end{align}
    for $i = 0, \ldots, N$, where $\ol{x}_i,\sigma_i$ are the mean and standard deviation
    of $X_i$ for the optimal forward trajectory distribution
    (the cyan trajectories in Fig.~\ref{fig:nonlin_trajs}).
For each element in Fig.~\ref{fig:heatmaps} we 
    average the RAE approximations \eqref{eq:mad_def} over both
    $20$ trials and $N=200$ timesteps.

\begin{figure}
    \centering
    \includegraphics[width=0.97\linewidth]{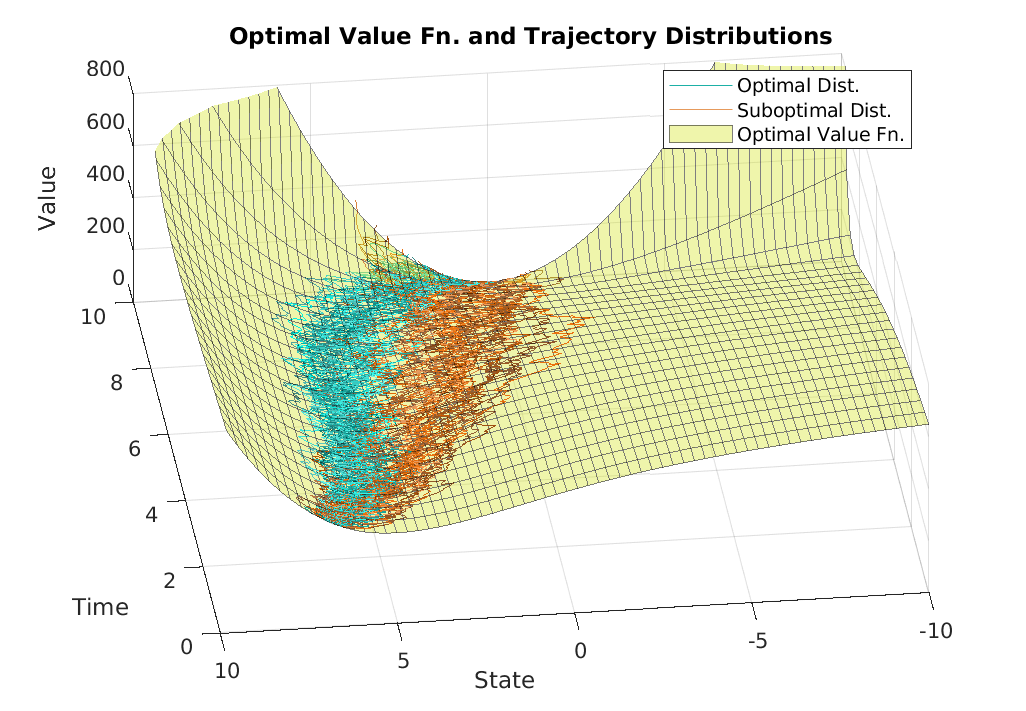}
    \caption{
        Optimal value function and trajectory distributions for the 1-dimensional nonlinear problem.
        The yellow surface is the ground truth optimal value function and the
        cyan and orange trajectories are the optimal and suboptimal trajectory distributions,
        respectively, used as forward distributions for evaluation.
    }
    \label{fig:nonlin_trajs}
\end{figure}

\begin{figure*}
    \centering
    \subfloat[Optimal forward sampling distribution generated with $K^\text{optimal}$ 
        (On-policy estimators).]
        {\includegraphics[width=0.47\linewidth]{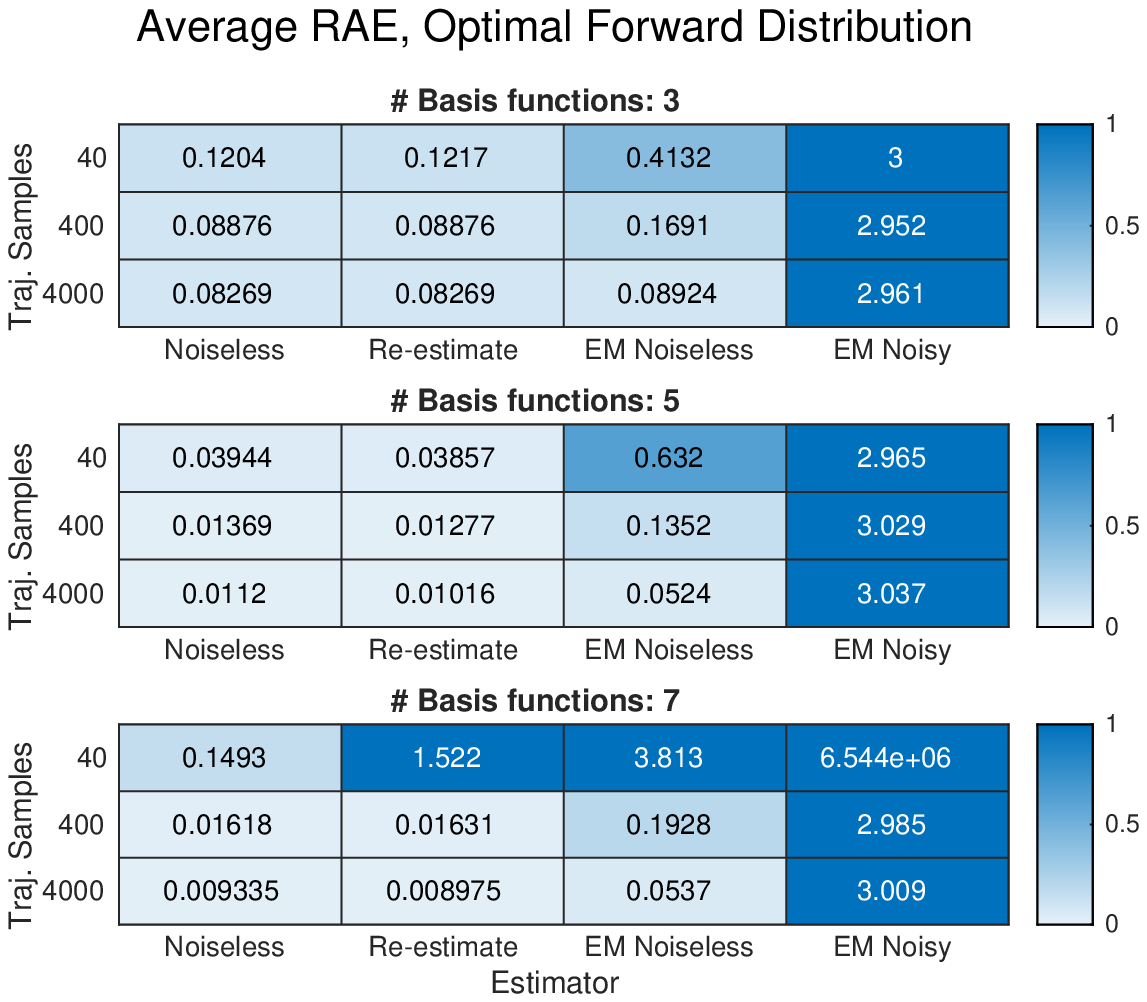}}
    \hspace{2em}
    \subfloat[Suboptimal forward sampling distribution generated with $K^\text{subopt}$ 
        (Off-policy estimators).]
        {\includegraphics[width=0.47\linewidth]{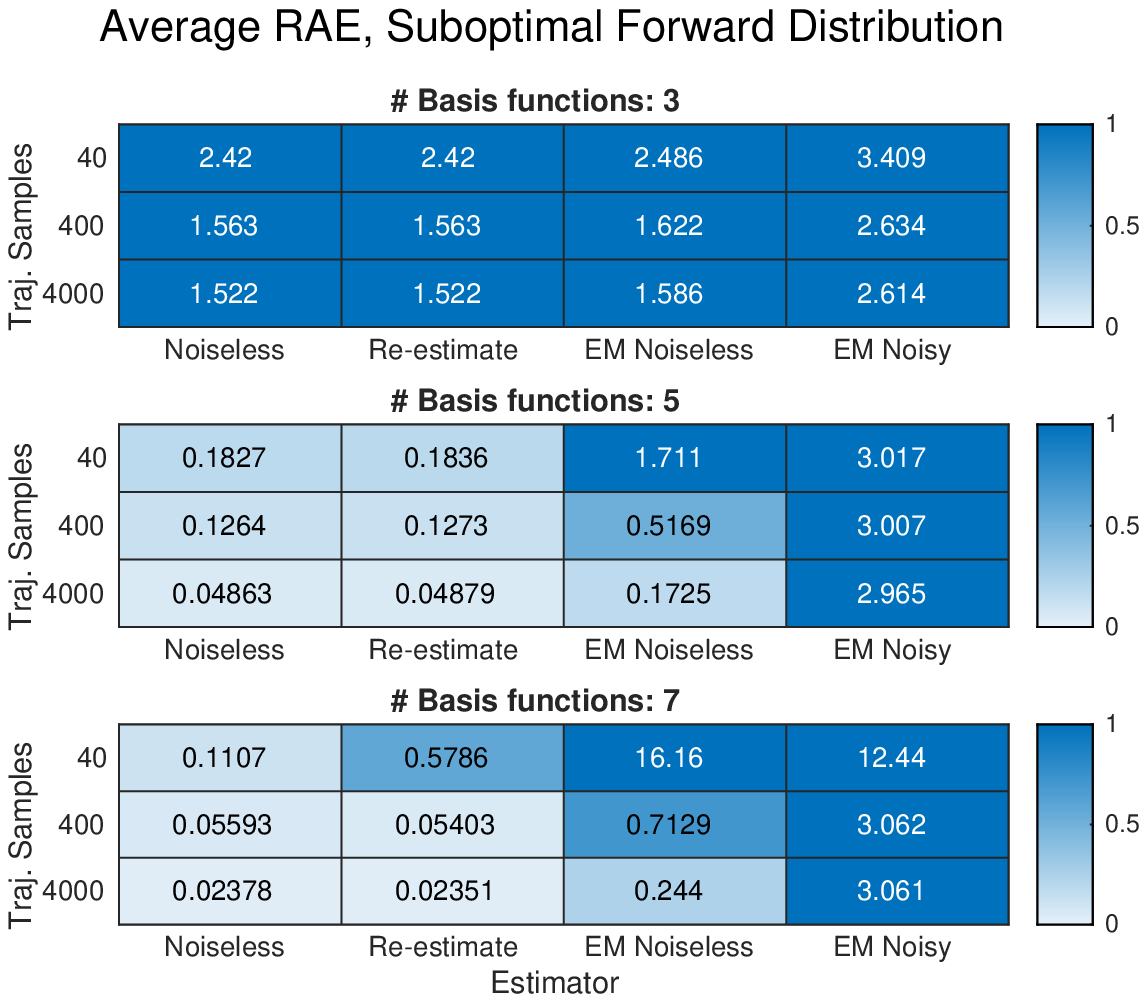}}
        \caption{Heatmaps of experiments comparing the proposed estimators (Noiseless/Re-estimate) 
            against na\"ive estimators (EM Noiseless/EM Noisy), with 
        varying numbers of basis functions 
        and numbers of trajectory samples. 
        Each matrix element is the relative absolute error of the value function
        averaged over both $20$ trials and $N=200$ timesteps.}
    \label{fig:heatmaps}
\end{figure*}


The results show that in all cases the proposed Taylor-based estimators perform as well as the
    Euler-Maruyama estimators and for the vast majority perform significantly better.
Although the Taylor-based estimators generally perform equally well, there are slight differences
    in how they perform in different conditions.
The Taylor-noiseless estimator seems to outperform the re-estimate estimator when the
    number of trajectory samples is low, and vice versa when the number is high.
Recall that the error analysis suggests that the re-estimate estimator has lower bias 
    but higher variance than the Taylor-noiseless estimator.
The simulated results confirm the theoretical results, that is, 
    when the number of trajectory samples is low, high variance makes the re-estimate estimator
    perform poorly, but when there are enough samples to overcome the variance in the
    estimator, the low bias properties can result in better accuracy.
In practice, however, it is likely that the low variance of the Taylor-noiseless estimator is preferable
    to the slightly more bias it introduces.

\subsection{LQR 4D Problem}

We also tested the proposed estimators on a linearized version of the 4-dimensional 
    finite time cart-pole problem,
\begin{align*}
    \dX_s &= \left(\begin{bmatrix}
0 & 1 & 0 & 0\\
0 & a_1 & a_2 & a_3 \\
0 & 0 & 0 & 1\\
0 & a_4 & a_5 & a_6
\end{bmatrix}X_{s}+\begin{bmatrix}
0\\
b_1 \\
0\\
b_2
\end{bmatrix}u_{s}\right) \ds
\\
&\quad +\begin{bmatrix}
0.01 & 0 & 0 & 0\\
0 & 0.1 & 0 & 1\\
0 & 0 & 0.01 & 0\\
0 & 0 & 0 & 0.1
\end{bmatrix} \dW_{s} \\
&= (A X_s + B u_s) \ds + \sigma \dW_s \text{,}
\end{align*}
    where $a_1, a_2, a_3, a_4, a_5, a_6, b_1, b_2$ are constant parameters and
    $x_0 = [0, 0 , \pi/9 , 0]^\top$.
For the suboptimal sampling distribution we selected a discrete time approximation of the time-invariant feedback policy
\begin{align*}
        K^\text{subopt}_s = \bigg ( A  + B
    \begin{bmatrix}
       0 & 0 & k_1 & k_2
    \end{bmatrix} \bigg ) X_s \text{,}
\end{align*} 
    where $k_1, k_2$ are constant parameters.
    The optimal policy is found through the 
    solution of the associated Riccati equations 
    (distributions visualized in Fig.~\ref{fig:cartpole_acc}(a)).

The value function model for $\wt{V}_i$ used Chebyshev functions of degree 2 and lower (15 basis functions).
The RAE approximations \eqref{eq:mad_def} are visualized in Fig.~\ref{fig:cartpole_acc}(b)
    where
    $\wt{\mathcal{C}}_i := \wt{\mathcal{C}}_i^1 \times \wt{\mathcal{C}}_i^2 
    \times \wt{\mathcal{C}}_i^3 \times \wt{\mathcal{C}}_i^4$
    and each $\wt{\mathcal{C}}_i^j$ is defined similarly to \eqref{eq:conf_reg} based on the
    mean and standard deviation of the optimal trajectories in each of the 4 dimensions.

As predicted by the error analysis, since this is an LQR problem and the value function is in the
    class of quadratic functions, the Taylor expansion-based estimators are able to produce
    approximations of the value function with accuracy near machine precision for both conditions.
For the suboptimal forward sampling condition the EM estimators diverge quickly during the backward
    pass.
For the optimal forward sampling condition, corresponding to the on-policy estimation, the
    EM estimators perform mediocre compared to the value function's variance
    and their error is still several orders of magnitudes higher than the Taylor estimators.

These results confirm that the proposed estimators are able to achieve near perfect performance
    on the most common problem in stochastic optimal control.
Further, they confirm that utilizing the second-order derivatives of the value function 
    is crucial for Girsanov-inspired off-policy estimator schemes, contrary to what
    na\"ive application of the theory would suggest.

\begin{figure}
    \centering
    \subfloat[Trajectory distributions for the two sampling conditions 
        ($K_i^\text{optimal}$ / $K_i^\text{subopt}$)]
        {\includegraphics[width=0.92\linewidth]{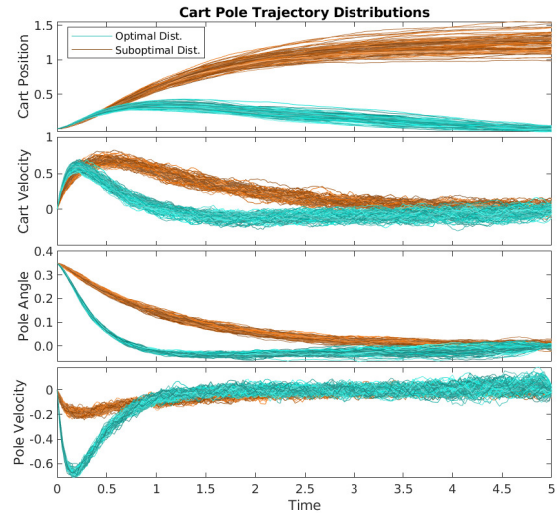}}
        \\
        \subfloat[Relative absolute error \eqref{eq:mad_def} for the two sampling conditions 
        ($K_i^\text{optimal}$ / $K_i^\text{subopt}$)]
        {\includegraphics[width=0.92\linewidth]{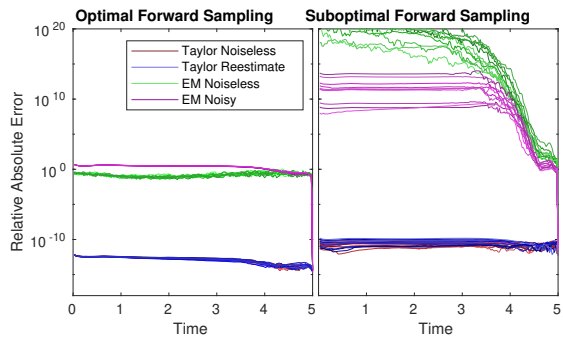}}
    \caption{Comparing the accuracy of the estimators on a 4-dimensional LQR approximation of
        cart-pole balancing system.
    }
    \label{fig:cartpole_acc}
\end{figure}

\section{Conclusion} 


Taylor-based estimators for numerically solving
    Feynman-Kac FBSDEs have been demonstrated to be significantly more accurate than
    na\"ive Euler-Maruyama-based estimators
    through both error analysis and numerical simulation.
These estimators are derived by using higher-order Taylor expansions and
    following the spirit of the continuous-time Feynman-Kac-Girsanov formulation.
Both error analysis and numerical simulation confirm that these estimators have 
    very high accuracy when applied to LQR problems.
Further, in simulation, the proposed estimators are orders of magnitude more accurate than the EM
    estimators in both LQR and nonlinear problems.
Using these results, this paper also proposes a method to use the estimated value function parameters
     for generating an improved policy.

Moving forward, the primary challenge with Feynman-Kac FBSDE methods as presented here is
    how to produce robust iterative methods.
Although value function approximation can be extremely accurate in  the proximity of the initial
    forward pass, even for off-policy methods, Runge's phenomenon begins dominating outside
    the sampling distribution.
As a consequence, when in some extrapolative region,  
    the approximation significantly underestimates the true value function,
    policy improvement begins to fail and future iterations  are constructed based on divergent policies with little room for improvement
    aside from starting over.


To overcome such difficulties, a potential solution is to use a two-phase algorithm where in 
the first phase, following the steps presented in this article, the initial policy is sampled and a single on-policy backward pass
    is performed to find the value function associated with the initial policy.
This work establishes that we can obtain a high-accuracy estimate of the value function in this way.
In the second phase, a gradual optimization
    technique such as stochastic gradient descent can be used to refine the value function 
    and policy approximation.
The arguments in the minimizations \eqref{eq:alphastar} and \eqref{eq:optimprove} need not be
    fully minimized, but can be differentiated with respect to the value function and policy parameters 
    to produce a step of stochastic gradient descent. 
Such techniques are similar to those used in deep reinforcement techniques such as
    deep deterministic policy gradient (DDPG) \cite{Lillicrap2016},
    but when combined with the proposed estimators might accelerate convergence considerably.


\section{Acknowledgements}
This work has been supported by NSF awards CMMI-1662523 and IIS-2008686 
    and ONR award N00014-18-1-2828.
The authors would like to thank Evangelos Theodorou for many helpful discussions and comments.

\bibliographystyle{plain}
\bibliography{library}

\appendix

\section{Proof of Proposition~\ref{cor:odd}} \label{sec:cor_odd}

\begin{proof}
In the following, the variable 
\begin{align*}
    \alpha := (\alpha_1, \ldots, \alpha_n) \in \N^n \text{,}
\end{align*}
    is used as multi-index notation,
\begin{align}
    |\alpha| &:= \alpha_1 + \cdots + \alpha_n, &
    \alpha! &:= \alpha_1! \cdots \alpha_n!, \nonumber \\
    x^\alpha &:= x_1^{\alpha_1} \cdots x_n^{\alpha_n}, &
    \partial_{x_\alpha} &:= 
    \frac{\partial^{|\alpha|}}{\partial x_1^{\alpha_1} \cdots \partial x_n^{\alpha_n}} \text{.} \nonumber
\end{align}
Let $j\geq 3$ be an odd number and suppose $\wt{V}^\mu_{i+1} \in C^k(\R^n)$ for some $k\geq j$.
The $j$-th order term of the Taylor expansion residual is given by Taylor's theorem as
\begin{align}
    \sum_{|\alpha| = j} \frac{1}{\alpha!} 
    \partial_{x_\alpha} \wt{V}^\mu_{i+1}(\ol{X}^\Q_{i+1}) (\Sigma_i W^\Q_i)^\alpha \label{eq:delta3}\text{.} 
\end{align}
It can be shown with algebra and the multinomial theorem 
    that there exists functions $\gamma_\alpha$ such that
    the $W^\Q_i$ terms can be linearly separated from the others,
\begin{align}
    \sum_{|\alpha| = j} 
    \gamma_\alpha(\partial_{x_\alpha} \wt{V}^\mu_{i+1}(\ol{X}^\Q_{i+1}), \Sigma_i) (W^\Q_i)^\alpha \text{.}
    \label{eq:oddterms}
\end{align}
Since both $\partial_{x_\alpha} \wt{V}^\mu_{i+1}(\ol{X}^\Q_{i+1})$ and $\Sigma_i$ are
    $X_i$-measurable, when taking the conditional expectation the operator passes inside
\begin{align}
    \sum_{|\alpha| = j} 
    \gamma_\alpha(\partial_{x_\alpha} \wt{V}^\mu_{i+1}(\ol{X}^\Q_{i+1}), \Sigma_i) 
    \E_{\wt{\Q}}[(W^\Q_i)^\alpha|X_i]
    \text{.} \label{eq:oppassinside}
\end{align}
Due to the independence of the different dimensions of $W^\Q_i$,
    the conditional expectation inside \eqref{eq:oppassinside} can be expanded into the product
\begin{align}
    \E_{\wt{\Q}}[(W^\Q_i)_1^{\alpha_1}|X_i] \cdots
    \E_{\wt{\Q}}[(W^\Q_i)_n^{\alpha_n}|X_i] \text{.}
\end{align}
Since $|\alpha|$ is odd, there exists an $l$ such that $\alpha_l$ is odd.
The properties of the standard normal distribution guarantee
\begin{align}
    \E_{\wt{\Q}}[(W^\Q_i)_l^{\alpha_l}|X_i] = 0 \text{,}
\end{align}
    and thus, \eqref{eq:oppassinside} is zero as well, so we arrive at the result
\begin{align}
    \E_{\wt{\Q}}[\sum_{|\alpha| = j} 
    \gamma_\alpha(\partial_{x_\alpha} 
    \wt{V}^\mu_{i+1}(\ol{X}^\Q_{i+1}), \Sigma_i) (W^\Q_i)^\alpha |X_i] = 0 \text{.}
\end{align}
\end{proof}

\section{Proof of Lemma~\ref{lemma:dtgirsanov}} \label{sec:dtgirsanov}

\begin{proof} 
    We have that $W_i^\Q$ is $\wt{\F}_{i+1}$-measurable because $W_i^\P$ and $D_i$ are measurable by definition.
For the joint distribution $W^\Q_{0:i} := \{W^\Q_j\}_{j=0}^{i}$ to be mutually independent normal
    random variables, its
    density function must be
    $
    \prod_{j=0}^{i} p_{\mathcal{N}}(w^\Q_j) \text{,} \nonumber
    $
    where
\begin{align}
    p_{\mathcal{N}}(w) := \gamma \exp(-\frac{1}{2} \|w\|^2)
    \text{,}
\end{align}
    is the multivariate normal density where $w \in \R^n$ 
    and $\gamma$ is a normalizing constant.
We now verify this is indeed the density of $W^\Q_{0:i}$ by showing
\begin{align}
    &(\wt{\Q}_{i+1} \circ (W^\Q_{0:i})^{-1})(A_{0:i}) \nonumber \\
    &\quad = \int_{B}\, \mathrm{d} \wt{\Q}_{i+1}  
    = \int_{A_{0:i}} \prod_{j=0}^{i} p_{\mathcal{N}}(w^\Q_j) \, \mathrm{d} w^\Q_{0:i} 
    \label{eq:densitydef}
    \text{,}
\end{align}
    where $A_{0:i} = A_0 \times \cdots \times A_i$, the product of sets in
    the sigma algebras $A_j \in \mathcal{B}(\R^n)$    
    and $B = (W^\Q_{0:i})^{-1}(A_{0:i}) \in \wt{\F}_{i+1}$.
\footnote{We prove this only for the rectangle semi-algebra over sets constructed as $A_{0:i}$ 
        \cite[p.~144]{resnick2003probability},
    and assume that the filtration is constructed such that $B' \in \wt{\F}_{i+1}$
    for all $A'_{0:i} \in \mathcal{B}(A_0 \times \ldots \times A_i)$.
    The probability measure \eqref{eq:densitydef} over $A_{0:i}$ is clearly $\sigma$-additive
    \cite[p.~43]{resnick2003probability} so we can use an extension theorem 
    \cite[p.~48]{resnick2003probability} to show
    that there exists a probability measure that extends to all $A'_{0:i}$.
}

Notice that
\begin{align}
    \Theta_{i+1} &= \prod_{j=0}^i \varphi(D_j,W_j^\P) \text{,} \nonumber \\
    \varphi(d,w) &:= \exp \bigg (-\frac{1}{2} \| d \|^2 + d^\top w \bigg) \nonumber \text{,}
\end{align}
    and let
    $\ol{\Theta}_{i+1} := \prod_{j=0}^i \ol{\varphi}_j(D_j,W^\P_j)$,
    where
    $\ol{\varphi}_j(D_j,W^\P_j) := \textbf{1}_{A_j}(W^\P_j - D_j) \varphi(D_j,W^\P_j)$,
    be the restriction of $\Theta_{i+1}$ to the set $B$.
By \eqref{eq:dtmeas} we have
\begin{align}
    \int_B \mathrm{d} \wt{\Q}_{i+1} 
    &= \int_B \Theta_{i+1} \mathrm{d} \wt{\P}_{i+1} 
    = \int_{\wt{\Omega}} \ol{\Theta}_{i+1} \mathrm{d} \wt{\P}_{i+1} \text{,} \nonumber
\end{align}
    and by the properties of conditional expectation \cite[p.~10]{yong1999stochastic} we have
\begin{align}
    &= \int_C \E_{\wt{\P}}[\ol{\Theta}_{i+1} | d_{0:i}, w^\P_{0:i-1}] 
    \mathrm{d} \wt{\P}_{D_{0:i}, W^\P_{0:i-1}} \nonumber \\
    &= \int_C \ol{\Theta}_{i} \E_{\wt{\P}}[\ol{\varphi}_i(d_i,W^\P_i) | d_{0:i}, w^\P_{0:i-1}] 
    \mathrm{d} \wt{\P}_{D_{0:i}, W^\P_{0:i-1}} \nonumber 
    \text{,}
\end{align}
    where
\begin{align}
    \mathrm{d} \wt{\P}_{D_{0:i}, W^\P_{0:i-1}} &= (\wt{\P} \circ (D_{0:i}, W^\P_{0:i-1})^{-1})
    (\mathrm{d} d_{0:i}, \mathrm{d} w^\P_{0:i-1}) \nonumber \text{.}
\end{align}
    is the derivative with respect to the distribution over the variables $(D_{0:i}, W^\P_{0:i-1})$,
    the conditional expectation is with respect to these variables' values given,
    and $C$ is the image $C = (D_{0:i}, W^\P_{0:i-1})(\wt{\Omega})$.
    The value $\ol{\Theta}_i$ can be taken outside the conditional expectation because it is
    deterministic with respect to the conditioned variables $(d_{0:i-1}, w^\P_{0:i-1})$.
From \cite[Chapter~1, Proposition~1.10]{yong1999stochastic}, it follows that
\begin{align}
    &\E_{\wt{\P}}[\varphi(d_i,W^\P_i) | d_{0:i}, w^\P_{0:i-1}] \nonumber \\
    &\quad = \int_{\wt{\Omega}} \ol{\varphi}_i(d_i,W^\P_i(\omega)) 
    \, \p(\mathrm{d} \omega | d_{0:i}, w^\P_{0:i-1}) \nonumber 
    \text{,}
\end{align}
    where $\p$ is a regular conditional probability.
By a change-of-variable and using the independence assumption on $W^\P_i$, we can write this
    integral as
\begin{align}
    & \int_{\R^n} \varphi(d_i,w^\P_i) \textbf{1}_{A_i}(w^\P_i - d_i) 
    \, \wt{\P}_{W^\P_i} (\mathrm{d} w^\P_i) \nonumber 
    \text{,}
\end{align}
    where $\wt{\P}_{W^\P_i}$ is the distribution in $\wt{\P}$ over $W^\P_i$ alone.
Converting the distribution to a density gives
\begin{align}
    & \int_{\R^n} \varphi(d_i,w^\P_i) \textbf{1}_{A_i}(w^\P_i - d_i) 
    p_{\mathcal{N}} (w^\P_i) \, \mathrm{d} w^\P_i \nonumber 
    \text{,}
\end{align}
    which can be algebraically reduced to
\begin{align}
    & \int_{\R^n} 
    p_{\mathcal{N}} (w^\P_i - d_i) \textbf{1}_{A_i}(w^\P_i - d_i) \, \mathrm{d} w^\P_i \nonumber 
    \text{.}
\end{align}
Define the change-of-variable mapping 
    $w^\Q_i|_{d_i}(w^\P_i) = w^\P_i - d_i$,
    noting that the Lebesgue measures will be equivalent due to translation invariance
    $\mathrm{d} w^\Q_i|_{d_i} = \mathrm{d} w^\P_i$,
    and apply it to the integral, giving
\begin{align}
    & \int_{A_i} 
    p_{\mathcal{N}} (w^\Q_i|_{d_i}) \, \mathrm{d} w^\Q_i|_{d_i} \nonumber 
    \text{.}
\end{align}
The value of this integral does not change for any $d_i$ in the range of $D_i$.
Plugging this back into the original integral and pulling it out due to its invariance,
    we have
\begin{align}
    \int_B \mathrm{d} \wt{\Q}_{i+1} 
    &= \int_C \Theta_{i} \bigg ( \int_{A_i} 
    p_{\mathcal{N}} (w^\Q_i|_{d_i}) \, \mathrm{d} w^\Q_i|_{d_i} \bigg )
    \mathrm{d} \wt{\P}_{D_{0:i}, W^\P_{0:i-1}} \nonumber \\
    &=\int_{A_i} 
    p_{\mathcal{N}} (w^\Q_i) \, \mathrm{d} w^\Q_i 
    \int_C \Theta_{i} \, 
    \mathrm{d} \wt{\P}_{D_{0:i}, W^\P_{0:i-1}} \nonumber \\
    &=\int_{A_i} 
    p_{\mathcal{N}} (w^\Q_i) \, \mathrm{d} w^\Q_i 
    \int_C \Theta_{i} \, 
    \mathrm{d} \wt{\P}_{i+1} \nonumber 
    \text{.}
\end{align}
Conditioning the right integral on the variables $(D_{0:i-1}, W^\P_{0:i-2})$
    we can continue this process until the right integral disappears and
    we are left with the right hand side of \eqref{eq:densitydef}.
\end{proof}

\section{Proof of Theorem~\ref{thm:biasvarhat} \& Theorem~\ref{thm:newests}}
\label{sec:ProofThm32n33}

\begin{proof}
Using the result \eqref{eq:deltadelta2} of Lemma~\ref{thm:driftdeltaest} we have
\begin{align}
    \wh{Y}_i &:= \wh{Y}_{i+1} - \Delta \wh{Y}_i \nonumber \\
    &= \wh{Y}_{i+1} - \Delta Y_i + (\delta^{\Delta \wh{Y}}_{i+1}
    - \E_{\wt{\Q}}[\delta^{\Delta \wh{Y}}_{i+1} |X_i, K_i]) \nonumber 
    \text{,}
\end{align}
    and so the expression for the bias is 
\begin{align}
    \E_{\wt{\P}}[Y_i - \wh{Y}_i | X_i, K_i]
    &= \E_{\wt{\P}}[Y_{i+1} - \wh{Y}_{i+1} | X_i, K_i] - \eps^{\P | \Q}_{i+1}
    \text{.}  \nonumber 
\end{align}
The variance of the estimator is
\begin{align}
    \var_{\wt{\P}}[\wh{Y}_i | X_i, K_i]
    &= \var_{\wt{\P}}[\wh{Y}_{i+1} - \Delta Y_i \nonumber \\
    &\quad+ (\delta^{\Delta \wh{Y}}_{i+1}
    - \E_{\wt{\Q}}[\delta^{\Delta \wh{Y}}_{i+1} | X_i, K_i]) | X_i, K_i] \nonumber \\
    &= \var_{\wt{\P}}[\delta^{\Delta \wh{Y}}_{i+1} - (Y_{i+1} - \wh{Y}_{i+1})| X_i, K_i] 
    \text{,}  \nonumber 
\end{align}
    noting that we can drop the terms $Y_i$ and $\E_{\wt{\Q}}[\delta^{\Delta \wh{Y}}_{i+1} |X_i, K_i]$
    because they are $(X_i,K_i)$-measurable.

For the re-estimate estimator we have
\begin{align}
    Y_{i+1} - \wh{Y}^{\textnormal{re-est}}_{i+1} 
    &= V^\mu_{i+1}(X_{i+1}) - \wt{V}^\mu_{i+1}(X_{i+1}) \nonumber \\
    &= \delta^{\wt{V}}_{i+1}
    \text{,} 
\end{align}
    and for the noiseless estimator we have
\begin{align}
    Y_{i+1} - \wh{Y}^{\textnormal{noiseless}}_{i+1} 
    &= V^\mu_{i+1}(X_{i+1}) - \wt{Y}_{i+1} \nonumber \\
    &= V^\mu_{i+1}(X_{i+1}) - (\wt{V}^\mu_{i+1}(X_{i+1}) - \delta^\textnormal{h.o.t.}_{i+1}) \nonumber \\
    &= \delta^{\Delta \wh{Y}}_{i+1}
    \text{,} 
\end{align}
    due to \eqref{eq:errwtY}.
Plugging these two equalities into the general expressions for the bias and variance and
    doing simple reductions yields the theorem results.
Note that Theorem~\ref{thm:biasvarhat} is proved by setting $\wt{\P} \equiv \wt{\Q}$,
    which entails $\eps^{\P | \Q}_{i+1} \equiv 0$, and by excluding $K_i$ from the conditional
    expectations.
\end{proof}

\section{Proof of Theorem~\ref{thm:diffmeaserr}} \label{sec:diffmeaserr}

\begin{proof}
    First note that the process $\{\Theta_i\}$ is a martingale in $\wt{P}$, that is,
    $\E_{\wt{\P}}[\Theta_j|\F_i] = \Theta_i$ for $j \geq i$.
    This can be shown by defining the measures 
\begin{align} 
    \mathrm{d} \wt{\Q} &= \Theta_j \mathrm{d} \wt{\P}_j 
    = \frac{\Theta_j}{\Theta_i} \mathrm{d} \wt{\pR}_i  \nonumber
    \text{,}
\end{align} 
    and noting that, for all $B_i \in \F_i$,
\begin{align} 
    \int_{B_i} \Theta_i \mathrm{d} \wt{\P}_j 
    &= \int_{B_i} \mathrm{d} \wt{\pR}_i 
    = \int_{B_i} \mathrm{d} \wt{\Q}
    = \int_{B_i} \Theta_j \mathrm{d} \wt{\P}_j \text{,} \nonumber
\end{align} 
    where the inner equality is due to $\wt{\pR}_i$ and $\wt{\Q}$ agreeing on $B_i$.

From \cite[Lemma~1]{lowther_2011} we have
\begin{align}
    \E_{\wt{\Q}}[\delta^{\Delta \wh{Y}}_{i+1} |\F_{i}] 
    &= \frac{\E_{\wt{\P}}[\Theta_{N} \delta^{\Delta \wh{Y}}_{i+1} |\F_{i}]}
            {\E_{\wt{\P}}[\Theta_{N} |\F_{i}]} \nonumber \\
    &= \frac{\Theta_{i} \E_{\wt{\P}}[\Theta_{i}^{-1} \Theta_{N} \delta^{\Delta \wh{Y}}_{i+1} |\F_{i}]}
            {\Theta_{i}} \nonumber \\
    &= \E_{\wt{\P}}[ (\Theta_{N} \Theta_{i+1}^{-1}) \varphi(D_i,W^\P_i) 
    \delta^{\Delta \wh{Y}}_{i+1} |\F_{i}]
         \nonumber \\
    &= \E_{\wt{\P}}[ \varphi(D_i,W^\P_i) 
    \delta^{\Delta \wh{Y}}_{i+1} |\F_{i}]
         \nonumber 
        \text{,}
\end{align}
    where the final equality is due to the tower property of conditional expectation and the
    $\F_{i+1}$ measurability of the remaining terms.
Again by the tower property of conditional expectation we have
\begin{align}
    \E_{\wt{\Q}}[\delta^{\Delta \wh{Y}}_{i+1} |X_i, K_i] 
    &= \E_{\wt{\P}}[\varphi(D_i,W^\P_i) \delta^{\Delta \wh{Y}}_{i+1} |X_i, K_i] \text{.}  \nonumber
\end{align}
By the Cauchy-Schwartz inequality, we have that
\begin{align}
    &| \E_{\wt{\Q}}[\delta^{\Delta \wh{Y}}_{i+1} |X_i, K_i] | \nonumber \\
    &\quad \leq \E_{\wt{\P}}[\varphi(D_i,W^\P_i)^2|X_i, K_i]^{1/2} 
    \E_{\wt{\P}}[(\delta^{\Delta \wh{Y}}_{i+1})^2 |X_i, K_i]^{1/2} \text{.} \nonumber
\end{align}
Using properties of log-normal distributions \cite{crow1987lognormal} we have
\begin{align}
    \E_{\wt{\P}}[\varphi(D_i,W^\P_i)^2|X_i, K_i]
    &= \E_{\wt{\P}}[\exp(\| D_i \|^2)|X_i, K_i]  \nonumber \\
    &= \exp(\| D_i \|^2) \text{,} \nonumber
\end{align}
    which, upon substitution, yields the desired result.
\end{proof}

\end{document}